\useunder{\uline}{\ul}{}
\DeclareMathAlphabet{\mathcal}{OMS}{cmsy}{m}{n}
\newtheorem{theorem}{Theorem}[section]
\newtheorem{lemma}[theorem]{Lemma}
\newtheorem{prop}[theorem]{Proposition}
\newtheorem{coro}[theorem]{Corollary}
\theoremstyle{definition}
\newtheorem{defn}[theorem]{Definition}
\theoremstyle{remark}
\newtheorem{remark}[theorem]{Remark}
\title{On the Chow Rings of the Moduli Spaces $\mathcal{M}_{5,8}$ and $\mathcal{M}_{5,9}$}								
\author{Yuhan Liu}\date{}				
\let\thetitle\@title
\let\theauthor\@author
\begin{document}
\hyphenpenalty=5000
\tolerance=1000
\title{On the Chow Rings of the Moduli Spaces $\mathcal{M}_{5,8}$ and $\mathcal{M}_{5,9}$}

\author{Yuhan Liu}

\maketitle

\begin{abstract}
\noindent In this paper we prove that the rational Chow rings of $\mathcal{M}_{5,8}$ and $\mathcal{M}_{5,9}$ are tautological, and that these moduli spaces have the Chow–K\"unneth generation Property.
\end{abstract}

\section{Introduction}

The moduli space of curves of genus $g$ has been a central topic in algebraic geometry over the past century. It has a natural compactification  $\overline{\mathcal{M}}_g$ by stable curves. Normalizing the singularities of the stable curves gives us curves with marked points. Therefore, the structure of the boundary of $\overline{\mathcal{M}}_g$ leads us to consider $\mathcal{M}_{g,n}$, which parametrizes moduli spaces of curves of genus $g$ with $n$ marked points. These $n$ marked points are ordered: One motivation for ordering the points is that different order corresponds to different glueing data.

One of the natural questions about moduli spaces $\mathcal{M}_{g,n}$ is to determine the Chow ring of $\mathcal{M}_{g,n}$, which has received considerable attention in the past 50 years. In 1983, David Mumford determined the rational Chow ring $A^*(\overline{\mathcal{M}}_2)$ \cite{M2}. (The integral Chow ring $A^*(\overline{\mathcal{M}}_2)$ is determined by Eric Larson in 2021 \cite{M2i}, but we are going to focus on rational Chow rings in this paper). In 1990, Carel Faber determined the rational Chow ring $A^*(\overline{\mathcal{M}}_3)$ \cite{M3}. Furthermore, substantial progress has been made on rational Chow rings $A^*(\mathcal{M}_g)$ for $g \leq 9$ \cite{M4} \cite{M5} \cite{M6} \cite{M789}.

There is a subring of the Chow ring called the tautological ring, whose structure is very well-understood. So it is natural to ask when the Chow ring is the same as the tautological ring, in which case we say the Chow ring is tautological. 

\begin{defn}
Let $f: \mathcal{C}_{g,n} \rightarrow \mathcal{M}_{g,n}$ be the universal curve, which comes equipped with $n$ sections $\sigma_i: \mathcal{M}_{g,n} \rightarrow \mathcal{C}_{g,n}$, corresponding to the $i$-th marked point.
We define $\psi$ classes and $\kappa$ classes as $$\psi_i=\sigma_i^{*}c_1(w_f),$$ $$\kappa_i =f_*\left(c_1(w_f)^{(i+1)}\right).$$ We call these classes tautological classes of $\mathcal{M}_{g,n}$ and we call the subring generated by these classes the tautological ring.
We also define $\lambda$ classes as $$\lambda_i=c_i\left(f_*w_f\right).$$
Note that by the Grothendieck Riemann-Roch Theorem, we can prove that $\lambda$ classes can be expressed in terms of $\kappa$ classes. In particular, the $\lambda$ classes are tautological.
\end{defn}
\begin{defn}(Definition 3.1 of \cite{HS})
We say $Y$ has the Chow–K\"unneth generation Property (CKgP, for short) if for all algebraic stacks $X$ (of finite type and admitting a stratification by global quotient stacks), the exterior product map  
$$A_*(Y) \otimes A_*(X) \rightarrow A_*(Y \times X)$$
is surjective.
\end{defn}
It is useful to know when the moduli spaces $\mathcal{M}_{g,n}$ have the CKgP, due to the inductive structure of the boundary.

In \cite{M4} \cite{M5} \cite{M6} \cite{M789}, the authors have proved that the rational Chow ring of $A^*(\mathcal{M}_{g})$ is tautological for $4 \leq g \leq 9$. Furthermore, Canning and Larson proved $A^*(\mathcal{M}_{3,n})$ is tautological for $n \leq 11$; $A^*(\mathcal{M}_{4,n})$ is tautological for $n \leq 11$; $A^*(\mathcal{M}_{5,n})$ is tautological for $n \leq 7$; $A^*(\mathcal{M}_{6,n})$ is tautological for $n \leq 5$ \cite{HS}. Moreover, Canning and Larson proved that these moduli spaces have the CKgP \cite{HS}.

In the current paper, we push the result further when $g=5$, in which case the rational Chow ring is known to be tautological up to $n=7$ by Canning and Larson \cite{HS}.
\medskip

\begin{theorem}
The rational Chow rings of $\mathcal{M}_{5,8}$ and $\mathcal{M}_{5,9}$ are tautological, and these moduli spaces satisfy the CKgP.  
\end{theorem}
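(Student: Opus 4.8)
The plan is to argue by induction on $n$, the case $n\le 7$ being \cite{HS}, and to carry out the inductive step by dissecting $\mathcal{M}_{5,n}$ along the gonality stratification of the underlying genus-$5$ curve and running the localization sequence over each stratum. Write $\mathcal{M}_{5,n}=\mathcal{M}^{\mathrm{ci}}_{5,n}\sqcup\mathcal{T}_{5,n}\sqcup\mathcal{H}_{5,n}$, where $\mathcal{M}^{\mathrm{ci}}_{5,n}$ is the open locus whose canonical model is a complete intersection of three quadrics in $\mathbb{P}^4$ (equivalently, gonality $4$), $\mathcal{T}_{5,n}$ the locus of gonality exactly $3$, and $\mathcal{H}_{5,n}$ the hyperelliptic locus; thus $\overline{\mathcal{T}_{5,n}}=\mathcal{T}_{5,n}\sqcup\mathcal{H}_{5,n}$ is the closed complement of the open stratum. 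I would use freely the closure properties of the CKgP from \cite{HS}: it passes to open substacks and, together with the complement, to closed substacks, and it holds for any quotient of a quasi-projective variety by a special group (such as $\mathrm{GL}_k$). Since the three strata are (built from iterated universal curves over) quotients of quasi-projective varieties by special groups, they have the CKgP, and hence so does $\mathcal{M}_{5,n}$ via the localization sequence; the real content is therefore the tautologicality of $A^*(\mathcal{M}_{5,n})$.

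For the open stratum I would present it as a global quotient: with $V$ a $5$-dimensional vector space, $\mathrm{Gr}=\mathrm{Gr}(3,H^0(\mathbb{P}(V),\mathcal{O}(2)))$, $U\subset\mathrm{Gr}$ the open locus of nets of quadrics whose base is a smooth genus-$5$ curve, and $\mathcal{X}\subset\mathbb{P}(V)\times U\to U$ the universal curve, one has $\mathcal{M}^{\mathrm{ci}}_{5,n}=[\mathcal{X}^{[n]}/\mathrm{GL}(V)]$, with $\mathcal{X}^{[n]}$ the complement of the diagonals in the $n$-fold fibre product over $U$ (the reduction from $\mathrm{PGL}(V)$ to the special group $\mathrm{GL}(V)$ being carried out via a $\mathbb{G}_m$-torsor). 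The base $\mathcal{M}^{\mathrm{ci}}_5$ has the CKgP, and its Chow ring is tautological because $\mathcal{M}^{\mathrm{ci}}_5$ is open in $\mathcal{M}_5$, whose rational Chow ring is tautological. Adjoining the $n$ marked points amounts to iterating ``pass to the universal curve, then delete the new section together with the previous ones''; at each step one shows (granting the control of exotic classes discussed below) that the Chow ring is generated by that of the base together with $c_1(\omega_\pi)$ and the section classes, and these turn out to be tautological — for instance the hyperplane class $h_i$ of the $i$-th $\mathbb{P}(V)$-factor is identified with $\psi_i$ up to a tautological pullback via the adjunction relation $\omega_\pi=\mathcal{O}_{\mathbb{P}(V)}(1)|_{\mathcal{X}}\otimes\pi^{*}(\text{line bundle})$, and the diagonal classes are polynomials in the $h_i$ and $\kappa$-classes — so $A^*(\mathcal{M}^{\mathrm{ci}}_{5,n})$ is tautological (and it retains the CKgP).

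For the trigonal stratum I would use the scrollar (Maroni) picture: a genus-$5$ trigonal curve is a divisor of class $3C_0+bf$ on a Hirzebruch surface $\mathbb{F}_e$, so $\mathcal{T}_{5,n}$ decomposes, Maroni stratum by Maroni stratum, into quotients by $\mathrm{GL}$-type groups of open subsets of linear systems on $\mathbb{P}^1$-bundles, with the marked points again handled by iterating the universal-curve construction. For the hyperelliptic stratum I would use the double-cover presentation: each marked point either lies over a branch point or is interchanged with its conjugate, so $\mathcal{H}_{5,n}$ is assembled from $\mathrm{GL}_2$-quotients of configuration spaces of points on $\mathbb{P}^1$ (the $12$ branch points together with the images of the marked points) and their double covers — alternatively one may cite the known tautological structure of $A^*(\mathcal{H}_{g,n})$ and the lower-genus cases of \cite{HS}. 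In both cases the relevant pieces have tautological Chow rings and the CKgP, whence so do $\mathcal{T}_{5,n}$, $\mathcal{H}_{5,n}$, and $\overline{\mathcal{T}_{5,n}}$.

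Finally, the localization sequences $A_*(\mathcal{H}_{5,n})\to A_*(\overline{\mathcal{T}_{5,n}})\to A_*(\mathcal{T}_{5,n})\to 0$ and $A_*(\overline{\mathcal{T}_{5,n}})\to A_*(\mathcal{M}_{5,n})\to A_*(\mathcal{M}^{\mathrm{ci}}_{5,n})\to 0$ assemble everything, and the same sequences tensored with an arbitrary $X$ give the CKgP for $\mathcal{M}_{5,n}$. The hard part is the tautologicality, and it has two faces. First, within each stratum one must genuinely rule out non-tautological classes — controlling, for the complete-intersection stratum, potential contributions of the relative Picard variety of the universal curve and of special point-configurations, and likewise for the scrollar and hyperelliptic models. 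Second, to splice the strata one needs the proper pushforwards along $\mathcal{H}_{5,n}\hookrightarrow\overline{\mathcal{T}_{5,n}}\hookrightarrow\mathcal{M}_{5,n}$ of the tautological classes of the sub-loci — that is, explicit expressions in $\psi$ and $\kappa$ for the classes of the trigonal and hyperelliptic loci and of their ``decorated'' variants carrying the $n$ marked points. As $n$ increases these decorated classes proliferate and the required relations and vanishings become progressively harder to establish; this bookkeeping is exactly what pins the current record at $n\le 7$ in \cite{HS}, and one should expect it to be near the limit of feasibility around $n=9$ — and possibly to fail genuinely somewhat beyond.
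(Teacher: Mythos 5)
There is a genuine gap, and your own closing paragraph essentially concedes it: you end by saying the bookkeeping ``pins the current record at $n\le 7$'' and that one should expect the argument to be ``near the limit of feasibility around $n=9$.'' The paper's entire point is to supply the missing idea that gets past $n=7$, and that idea is absent from your proposal. Concretely: the productive way to present the complete-intersection stratum is not your ``curve first, then iterate the universal curve'' construction, but the reverse fibration ``points first, then curves through them'': one maps $\mathcal{M}_{5,n}\setminus\mathcal{M}_{5,n}^3$ to the configuration space $(\mathbb{P}\mathcal{V})^n$ over $BSL_5$ and realizes it as an open substack of the Grassmann bundle $G(3,\mathcal{E})$, where $\mathcal{E}$ is the kernel of the evaluation map $Sym^2\mathcal{V}^\vee\to\bigoplus_i\eta_i^*\mathcal{O}(2)$. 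This is what makes the Chow ring visibly generated by classes of the base plus Chern classes of the tautological subbundle, with no exotic classes to control. Your iterated-universal-curve step requires knowing that $A^*$ of the universal curve is generated over the base by $c_1(\omega_\pi)$ and section classes, which is exactly the unproved ``control of exotic classes'' you defer; deferring it is not a proof.

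Second, even granting the Grassmann-bundle picture, the specific obstruction at $n=8,9$ is that $\mathcal{E}$ fails to be a bundle where the marked points do not impose independent conditions on quadrics. The paper's key new content (Proposition 2.1) classifies this failure: for $n=8$ it happens exactly when $\mathcal{O}_C(p_1+\cdots+p_8)\cong\omega_C$, and for $n=9$ exactly when eight of the nine points sum to $\omega_C$. One must then treat the degenerate locus $\mathcal{M}_\omega$ separately: its fundamental class is shown tautological by Porteous' formula applied to $f_*\omega_f\to\bigoplus\sigma_i^*\omega_f$, and its Chow ring is computed by a second Grassmann-bundle construction over configurations of seven points in the hyperplane $\mathbb{P}^3$ spanned by the eight (using that the eighth point of a complete intersection of three quadrics in $\mathbb{P}^3$ is determined by the other seven). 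None of this appears in your proposal. Your treatment of the trigonal and hyperelliptic strata is also more laborious than necessary: the paper simply cites the results of Canning--Larson that classes supported on the hyperelliptic (resp.\ trigonal) locus of $\mathcal{M}_{5,n}$ are tautological for $n\le 16$ (resp.\ $n\le 12$), which also disposes of the pushforward/splicing issue you raise. Your CKgP bookkeeping via open/closed decompositions and quotients by special groups is in the right spirit and matches the paper's citations, but without the independent-conditions analysis the theorem is not proved.
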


For any tetragonal smooth curve of genus 5, we consider its canonical embedding in $\mathbb{P}^4$. Any $n$ points on this curve always impose independent conditions on quadrics in $\mathbb{P}^4$ when $n$ is at most $7$. This is why the previous method showing the Chow ring is tautological breaks down when $n \geq 8$.
The key new innovation of this paper is to classify those configurations of 8 and 9 points that don't impose independent conditions, and prove that the loci of such marked curves have fundamental classes and Chow rings that are tautological.

\textbf{Idea of the proof.}
In the paper by Samir Canning and Hannah Larson \cite{HS}, they proved that all classes in $\mathcal{M}_{5,n}$ supported on the hyperelliptic locus are tautological for $n \leq 16$, and all classes in $\mathcal{M}_{5,n}$ supported on the trigonal locus are tautological for $n \leq 12$. By excision, it suffices to show that the Chow ring of the open locus $\mathcal{M}_{5,n} \setminus \mathcal{M}_{5,n}^3$ in $\mathcal{M}_{5,n}$ is tautological for $n=8,9$, where $\mathcal{M}_{g,n}^k$ is the locus of curves of gonality $\leq k$. This locus parametrizes curves which are complete intersections of three quadrics in $\mathbb{P}^4$ under the canonical embedding. Therefore, it is almost a Grassmann bundle over the configuration space of $n$ points, namely $(\mathbb{P}^4)^n$. It is not exactly a Grassmann bundle since the $n$ points may not impose independent conditions on quadrics in $\mathbb{P}^4$. We thus need to cut out the configurations of $n$ points according to their failure to impose independent condition on quadrics. For the $n$-pointed curves in the locus $\mathcal{M}_{5,n} \setminus \mathcal{M}_{5,n}^3$ in $\mathcal{M}_{5,n}$, we will see that for $n=7$, the $n$ marked points will always impose independent conditions on quadrics in $\mathbb{P}^4$; for $n=8$, the $8$ marked points will impose independent conditions on quadrics in $\mathbb{P}^4$ unless these 8 points sum up to the canonical bundle; for $n=9$, the $9$ marked points will impose independent conditions on quadrics in $\mathbb{P}^4$ unless $8$ of the $9$ points sum up to the canonical bundle.

\paragraph{Notation.} Throughout the paper, we use $A^*(\cdot)$ to represent the Chow ring with $\textit{rational}$ coefficients.
\paragraph{Convention.} For any vector bundle $\mathcal{K}$, we define its projectivization $\mathbb{P}\mathcal{K}:=\mathscr{P}\mathrm{roj}(Sym^\bullet \mathcal{K}^\vee)$. 
\paragraph{Characteristic hypothesis.} We work over an algebraically closed field of characteristic not 2, 3 or 5.

\paragraph{Acknowledgments.} I would like to thank my PhD advisor Eric Larson for helpful guidance throughout this project and feedback on earlier drafts of this manuscript. I would also like to thank
Samir Canning and Hannah Larson for feedback on an earlier draft of this paper.

\section{Independent locus in $\mathcal{M}_{5,n} \setminus \mathcal{M}_{5,n}^3$ for $n \leq 12 $}
 
More rigorously, the open locus $\mathcal{M}_{5,n}\setminus\mathcal{M}_{5,n}^3$ we mentioned above is a stack whose objects over a scheme $S$ are given by the following commutative diagrams:
\[\begin{tikzcd}
	C && P \\
	\\
	S
	\arrow["j", hook, from=1-1, to=1-3]
	\arrow["f", from=1-1, to=3-1]
	\arrow["\pi", from=1-3, to=3-1]
	\arrow["{\sigma_1,...,\sigma_n}", shift left=3, bend left, from=3-1, to=1-1]
\end{tikzcd}\]
where $f:C \rightarrow S$ is a smooth proper relative curve with $n$ pairwise disjoint sections $\sigma_1,...,\sigma_n : S \rightarrow C$; $\pi: P \rightarrow S$ is a $\mathbb{P}^4$-fibration; $j: C \hookrightarrow P$ is a closed embedding such that for every geometric point $s \in S$, $C_s \hookrightarrow \mathbb{P}_{\kappa(s)}^4$ is of degree $8$ via the canonical embedding. The morphisms in $\mathcal{M}_{5,n}\setminus\mathcal{M}_{5,n}^3$ between objects $\left(C \rightarrow P \rightarrow S, \sigma_1,...,\sigma_n:S \rightarrow C\right)$ and $\left(C' \rightarrow P' \rightarrow S, \sigma_1',...,\sigma_n':S \rightarrow C'\right)$ are isomorphisms $P \rightarrow P'$ inducing isomorphisms $C \rightarrow C'$ sending the sections $\sigma_i$ to $\sigma_i'$. We define the independent locus $U_n$ to be the open substack of $\mathcal{M}_{5,n}\setminus\mathcal{M}_{5,n}^3$ with the extra condition that $\sigma_1,...,\sigma_n : S \rightarrow C$ imposes independent conditions on quadrics. 
Note that the open substack $U_n$ admits a natural morphism to $BPGL_5$, sending the family of embedded curves to its associated $\mathbb{P}^4$-fibration. We define the stack $\mathcal{M}_{5,n}'$ by the following Cartesian diagram 
\[\begin{tikzcd}
	{\mathcal{M}_{5,n}'} && {U_n} \\
	\\
	{BSL_5} && {BPGL_5}
	\arrow[from=1-1, to=1-3]
	\arrow[from=1-1, to=3-1]
	\arrow[from=1-3, to=3-3]
	\arrow[from=3-1, to=3-3]
\end{tikzcd}\]

The stack $\mathcal{M}_{5,n}'$ is a $\mu_5$ gerbe over $U_n$. Thus $A^*(
\mathcal{M}_{5,n}') \cong A^*(U_n)
$. Furthermore, the points of $\mathcal{M}_{5,n}'$ over a scheme $S$ are given by diagrams 
\[\begin{tikzcd}
	C && {\mathbb{P}V} \\
	\\
	S
	\arrow["j", hook, from=1-1, to=1-3]
	\arrow["f", from=1-1, to=3-1]
	\arrow["\pi", from=1-3, to=3-1]
	\arrow["{\sigma_1,...,\sigma_n}", shift left=3, bend left, from=3-1, to=1-1]
\end{tikzcd}\]
where $V$ is a rank 5 vector bundle over $S$ with trivial first Chern class. Let $\mathcal{V}$ be the universal bundle over $BSL_5$. We have a natural map $\gamma:\mathbb{P}\mathcal{V} \rightarrow BSL_5$. Let $(\mathbb{P}\mathcal{V})^n$ be the fiber product of $n$ copies of $\gamma:\mathbb{P}\mathcal{V} \rightarrow BSL_5$ over $BSL_5$. We use $\eta_i$ to denote the $i$-th projection  from $(\mathbb{P}\mathcal{V})^n$ to $\mathbb{P}\mathcal{V}$. By our construction, there are $n$ natural maps $\mathcal{M}_{5,n}' \rightarrow \mathbb{P}\mathcal{V}$ corresponding to $\sigma_i$. By the universal property of the fibered product, we have a map $b: \mathcal{M}_{5,n}' \rightarrow (\mathbb{P}\mathcal{V})^n$. Therefore, we have a composition map $p: \mathcal{M}_{5,n}' \xrightarrow{b} (\mathbb{P}\mathcal{V})^n \xrightarrow{\eta_i} \mathbb{P}\mathcal{V} \xrightarrow{\gamma} BSL_5$. Since $(\mathbb{P}\mathcal{V})^n$ is defined by pullback, we have $\gamma \circ \eta_i = \gamma \circ \eta_j$ for any $i,j$. We can thus denote $\gamma \circ \eta_i$ by $\overline{\gamma}$. We then consider the evaluation map on $(\mathbb{P}\mathcal{V})^n$: \begin{equation}
\overline{\gamma}^*\gamma_*\mathcal{O}_{\mathbb{P}\mathcal{V}}(2)=\eta_i^*Sym^2\mathcal{V}^{\vee} \rightarrow \bigoplus^{n}_{i=1}\eta_i^*\mathcal{O}_{\mathbb{P}\mathcal{V}}(2).   
\label{eva1}\end{equation} 
Note that by Nakayama's lemma, the evaluation map is surjective if and only if it is surjective on fibers. If we take any point $\Gamma$ in $(\mathbb{P}\mathcal{V})^n$, which is a collection of $n$ points $p_1,...,p_n$, the fiber of the evaluation map over $\Gamma$ is the map $H^0(\mathbb{P}^4,\mathcal{O}_{\mathbb{P}^4}(2)) \rightarrow H^0(\Gamma,\mathcal{O}_{\mathbb{P}^4}(2)|_\Gamma)$. 

\begin{prop}
    Assume $\Gamma$ is a collection of $n$ distinct points in $\mathbb{P}^4$, say $p_1,...,p_n$, which lie on a curve $C$ that is a complete intersection of 3 quadrics in $\mathbb{P}^4$. For $n \leq 7$, the map $$H^0(\mathbb{P}^4,\mathcal{O}_{\mathbb{P}^4}(2)) \rightarrow H^0(\Gamma,\mathcal{O}_{\mathbb{P}^4(2)}|_\Gamma)$$ is always surjective; for $n=8$, the map is surjective if and only if $\omega_C \ncong \mathcal{O}_C(p_1+ \cdots +p_8)$; for $n=9$, the map is surjective if and only if $\omega_C \ncong \mathcal{O}_C(p_{i_1}+ \cdots +p_{i_8})$ for $\{p_{i_1}, \cdots, p_{i_8}\} \subset \{1,2, \cdots 9\}$, in other words, 9 such points don't impose independent conditions on quadrics in $\mathbb{P}^4$ if and only if 8 of them don't.
\label{prop 0.2}
\end{prop}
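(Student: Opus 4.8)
The plan is to push the computation onto the curve $C$. Since $C$ is a complete intersection of quadrics in $\mathbb{P}^4$, adjunction gives $\omega_C\cong\mathcal{O}_C(1)$, so $\mathcal{O}_{\mathbb{P}^4}(2)|_C\cong\omega_C^{\otimes 2}$, a line bundle of degree $16=4g-4$ with $h^0=12$ by Riemann--Roch; and since $C$ is cut out by a net of quadrics, $h^0(\mathbb{P}^4,I_C(2))=3$, so a dimension count ($15-3=12$) — equivalently, the projective normality of complete intersections, i.e. $H^1(\mathbb{P}^4,I_C(2))=0$, which one can also check from the Koszul complex — shows the restriction $H^0(\mathbb{P}^4,\mathcal{O}(2))\to H^0(C,\omega_C^{\otimes 2})$ is surjective. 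Since the map in the statement factors as this restriction followed by $H^0(C,\omega_C^{\otimes 2})\to H^0(\Gamma,\mathcal{O}_\Gamma)$, it is surjective if and only if the second map is.

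From the exact sequence $0\to\omega_C^{\otimes 2}(-\Gamma)\to\omega_C^{\otimes 2}\to\mathcal{O}_\Gamma\to 0$ on $C$, this latter surjectivity is equivalent to $H^1(C,\omega_C^{\otimes 2}(-\Gamma))=0$, which by Serre duality is equivalent to
$$H^0\!\left(C,\mathcal{O}_C(p_1+\cdots+p_n-K_C)\right)=0.$$
The line bundle $L:=\mathcal{O}_C(p_1+\cdots+p_n-K_C)$ has degree $n-8$. For $n\le 7$ this is negative, so $H^0(L)=0$ automatically and the map is surjective in all cases. For $n=8$ it has degree $0$, so $H^0(L)\neq 0$ precisely when $L\cong\mathcal{O}_C$, i.e. when $\omega_C\cong\mathcal{O}_C(p_1+\cdots+p_8)$, which is exactly the claimed criterion.

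For $n=9$, $L$ has degree $1$, so $H^0(L)\neq 0$ if and only if $L\cong\mathcal{O}_C(q)$ for some $q\in C$, i.e. $p_1+\cdots+p_9\sim K_C+q$; the content of the $n=9$ case is to show such a $q$ must be one of the marked points. For this I would compute via Riemann--Roch that $h^0(\omega_C(q))=5=h^0(\omega_C)$, so that the inclusion $H^0(\omega_C)\hookrightarrow H^0(\omega_C(q))$ given by multiplication with the canonical section of $\mathcal{O}_C(q)$ is an isomorphism; consequently every effective divisor linearly equivalent to $K_C+q$ contains $q$. Applying this to the reduced divisor $p_1+\cdots+p_9\in|K_C+q|$ forces $q=p_i$ for some $i$, and then the remaining eight points sum to $K_C$; the reverse implication is immediate. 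I expect this last divisor-theoretic step — checking that the ``add a fixed point'' map $|K_C|\to|K_C+q|$ is an isomorphism of linear systems and using distinctness of the $p_i$ — to be essentially the only non-formal point, the rest being a standard cohomology chase.
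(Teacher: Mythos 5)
Your argument is correct and follows essentially the same route as the paper: factor the evaluation map through $H^0(C,\mathcal{O}_C(2))$, reduce surjectivity of the second factor to $H^1(C,\omega_C^{\otimes 2}(-\Gamma))=0$, apply Serre duality, and split into cases by the degree $n-8$ of $\mathcal{O}_C(p_1+\cdots+p_n-K_C)$, with the $n=9$ case handled by observing that $q$ is a base point of $|K_C+q|$ and hence must be one of the $p_i$. The only cosmetic difference is that the paper cites Max Noether's theorem for the surjectivity of $H^0(\mathbb{P}^4,\mathcal{O}(2))\to H^0(C,\mathcal{O}_C(2))$, whereas you obtain it from the Koszul complex / dimension count $15-3=12$, and you spell out the Riemann--Roch computation $h^0(\omega_C(q))=5=h^0(\omega_C)$ that the paper leaves implicit; both are fine.
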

\begin{proof} By our assumption, the evaluation map $H^0(\mathbb{P}^4,\mathcal{O}_{\mathbb{P}^4}(2)) \rightarrow H^0(\Gamma,\mathcal{O}_{\mathbb{P}^4}(2)|_\Gamma)$ factors as \\ $H^0(\mathbb{P}^4,\mathcal{O}_{\mathbb{P}^4}(2)) \rightarrow H^0(C,\mathcal{O}_C(2)) \rightarrow H^0(\Gamma,\mathcal{O}_{\mathbb{P}^4}(2)|_\Gamma) \rightarrow 0$. By Max Noether's Theorem \cite{MN}, we know that the first map is surjective. It remains to show that the second map is surjective. To do so, we consider the exact sequence 
\begin{equation}
0 \rightarrow \mathcal{O}_C(2)(-\Gamma) \rightarrow \mathcal{O}_C(2) \rightarrow \mathcal{O}_C(2)|_\Gamma \rightarrow 0.  
\label{res}
\end{equation} 
After taking global sections, it suffices to show that $H^1(\mathcal{O}_C(2)(-\Gamma))=0$. By Serre duality, this is equivalent to showing that $H^0(\mathcal{O}_C(-2)(\Gamma)\otimes \omega_C)=0$. 

For $n\leq7$, the bundle $\mathcal{O}_C(-2)(\Gamma)\otimes \omega_C$ is of degree $-16+n+8<0$. Thus we always have $H^0(\mathcal{O}_C(-2)(\Gamma)\otimes \omega_C)=0$. 

For $n=8$, we have $\deg \mathcal{O}_C(-2)(\Gamma)\otimes \omega_C = 0$. Therefore, $H^0(\mathcal{O}_C(-2)(\Gamma)\otimes \omega_C)=0$ if and only if $\omega_C \ncong \mathcal{O}_C(p_1+ \cdots +p_8).$ 

For $n=9$, the line bundle $\mathcal{O}_C(-2)(\Gamma)\otimes \omega_C \cong \omega_C^\vee(p_1+ \cdots + p_9)$ has degree one, thus $h^0(\mathcal{O}_C(-2)(\Gamma)\otimes \omega_C)\neq 0$ if and only if $\mathcal{O}_C(-2)(\Gamma)\otimes \omega_C \cong \mathcal{O}(p)$ for some point $p \in C$. Equivalently, $\mathcal{O}(p_1+ \cdots + p_9) \cong \omega_C(p).$ Note that $p$ is a base point of $\omega_C \otimes \mathcal{O}(p)$ and base points of $\mathcal{O}(p_1+ \cdots +p_9)$ are contained in the set $ \{p_1, \cdots, p_9\}$. Therefore, $p=p_i$ for some $1 \leq i \leq 9$. Without loss of generality, we assume $p=p_9$. We thus have $\mathcal{O}(p_1+ \cdots +p_8) \cong \omega_C$. Therefore, 9 points don't impose independent conditions if and only if 8 of them don't, in which case these 8 points lie on a hyperplane. 
\end{proof}

\begin{coro}
    $U_n=\mathcal{M}_{5,n} \setminus \mathcal{M}_{5,n}^3$ for $n \leq 7$.
\end{coro}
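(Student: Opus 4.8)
The plan is to read this off directly from Proposition~\ref{prop 0.2}. By construction $U_n$ is an \emph{open} substack of $\mathcal{M}_{5,n}\setminus\mathcal{M}_{5,n}^3$, so one inclusion is automatic and only $\mathcal{M}_{5,n}\setminus\mathcal{M}_{5,n}^3 \subseteq U_n$ needs proof. Since $U_n$ is cut out inside $\mathcal{M}_{5,n}\setminus\mathcal{M}_{5,n}^3$ precisely by the condition that $\sigma_1,\dots,\sigma_n$ impose independent conditions on quadrics, it suffices to verify that this condition holds for every object of $\mathcal{M}_{5,n}\setminus\mathcal{M}_{5,n}^3$ when $n\leq 7$.

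First I would reduce to geometric points. As recorded before Proposition~\ref{prop 0.2}, the evaluation map \eqref{eva1}, being defined fibrewise, is surjective if and only if it is surjective at every geometric point, by Nakayama's lemma. So fix an object $(C\hookrightarrow P\to S,\ \sigma_1,\dots,\sigma_n)$ of $\mathcal{M}_{5,n}\setminus\mathcal{M}_{5,n}^3$ and a geometric point $s\in S$. Then $C_s$ is a smooth genus-$5$ curve of gonality $\geq 4$, hence canonically embedded in $\mathbb{P}^4$ as a complete intersection of the three-dimensional net of quadrics through it (the classical description of canonical genus-$5$ curves, as recalled in the introduction), and since the sections are pairwise disjoint, $\Gamma_s := \{\sigma_1(s),\dots,\sigma_n(s)\}$ is a collection of $n$ distinct points lying on $C_s$. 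Thus the hypotheses of Proposition~\ref{prop 0.2} are met.

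Now the case $n\leq 7$ of Proposition~\ref{prop 0.2} says exactly that $H^0(\mathbb{P}^4,\mathcal{O}_{\mathbb{P}^4}(2))\to H^0(\Gamma_s,\mathcal{O}_{\mathbb{P}^4}(2)|_{\Gamma_s})$ is surjective; this is the fibre at $s$ of \eqref{eva1}. As $s$ was arbitrary, \eqref{eva1} is surjective on all fibres and hence surjective, so the chosen object lies in $U_n$. This yields the missing inclusion and therefore $U_n=\mathcal{M}_{5,n}\setminus\mathcal{M}_{5,n}^3$ for $n\leq 7$.

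I do not anticipate a real obstacle: the content is entirely in Proposition~\ref{prop 0.2}, and the only points requiring a line of justification are the passage from its fibrewise statement to the sheaf-level statement about \eqref{eva1} over an arbitrary base (the Nakayama argument already noted in the text) and the fact that over $\mathcal{M}_{5,n}\setminus\mathcal{M}_{5,n}^3$ the curves are genuinely complete intersections of quadrics, so that the fibrewise evaluation map factors through $H^0(C_s,\mathcal{O}_{C_s}(2))$ as in the proof of Proposition~\ref{prop 0.2}.
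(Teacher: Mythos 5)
Your argument is correct and is exactly the reasoning the paper intends: the corollary is stated without proof as an immediate consequence of Proposition~\ref{prop 0.2}, since $U_n$ is by definition the locus where the evaluation map \eqref{eva1} is surjective and the proposition guarantees fibrewise surjectivity for $n\leq 7$. Your added details (reduction to geometric points via Nakayama, and the fact that non-hyperelliptic, non-trigonal genus-$5$ curves are complete intersections of three quadrics) are precisely the background the paper has already set up before the proposition.
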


\begin{coro}
    $U_8$ is the open locus in $\mathcal{M}_{5,8}\setminus\mathcal{M}_{5,8}^3$ where the $8$ points don't sum up to the canonical bundle.
    \label{u8}
\end{coro}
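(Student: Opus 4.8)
The plan is to deduce this directly from Proposition~\ref{prop 0.2}, together with two structural observations: that ``imposing independent conditions on quadrics'' is, by Nakayama's lemma, a fibrewise condition, and that its failure locus is closed. Write $S:=\mathcal{M}_{5,8}\setminus\mathcal{M}_{5,8}^3$ and let $f\colon \mathcal{C}\to S$ be the universal curve with its disjoint sections $\sigma_1,\dots,\sigma_8$. By definition $U_8\subseteq S$ is the open substack on which the pullback to $S$ of the evaluation map (\ref{eva1}) with $n=8$ is surjective; since its source and target are vector bundles, by Nakayama this holds precisely at those geometric points $s\in S$ at which the fibre map $H^0(\mathbb{P}^4,\mathcal{O}_{\mathbb{P}^4}(2))\to H^0(\Gamma_s,\mathcal{O}_{\mathbb{P}^4}(2)|_{\Gamma_s})$ is surjective, where $\Gamma_s=\sigma_1(s)+\cdots+\sigma_8(s)$. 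By Proposition~\ref{prop 0.2} applied to the fibre $\mathcal{C}_s$, this surjectivity is equivalent to $\omega_{\mathcal{C}_s}\ncong\mathcal{O}_{\mathcal{C}_s}(\Gamma_s)$.

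Next I would identify the ``sums to the canonical bundle'' locus as a closed substack. Consider the relative line bundle $\mathcal{L}:=\omega_{\mathcal{C}/S}\otimes\mathcal{O}_{\mathcal{C}}(-\sigma_1-\cdots-\sigma_8)$ on $\mathcal{C}$, which has relative degree $8-8=0$. By upper semicontinuity of $h^0$ on the fibres of the proper morphism $f$, the locus $Z\subseteq S$ where $h^0(\mathcal{C}_s,\mathcal{L}_s)\ge 1$ is closed; and on a smooth proper curve a degree-$0$ line bundle has a nonzero global section if and only if it is trivial, so $Z$ is exactly the set of geometric points with $\omega_{\mathcal{C}_s}\cong\mathcal{O}_{\mathcal{C}_s}(\Gamma_s)$, i.e.\ where the $8$ marked points sum to the canonical bundle.

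Finally I would conclude by comparing $U_8$ with the open substack $S\setminus Z$. By the two previous paragraphs, $U_8$ and $S\setminus Z$ have the same geometric points. Since an open substack of $S$ is uniquely determined by the corresponding open subset of the underlying topological space $|S|$, it follows that $U_8=S\setminus Z$, which is precisely the open locus in $\mathcal{M}_{5,8}\setminus\mathcal{M}_{5,8}^3$ on which the $8$ marked points do not sum to the canonical bundle. There is essentially no deep obstacle here: the statement is a repackaging of Proposition~\ref{prop 0.2}, and the only points requiring a little care are (i) checking that the failure locus $Z$ is genuinely closed, so that $S\setminus Z$ is a legitimate open substack, and (ii) the standard fact that an open substack is detected on geometric points, which is what lets us upgrade the pointwise equivalence of Proposition~\ref{prop 0.2} to an equality of substacks.
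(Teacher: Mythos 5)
Your argument is correct and follows essentially the same route as the paper, which states this corollary as an immediate consequence of Proposition~\ref{prop 0.2} without further proof; your additional checks (Nakayama to reduce to fibres, semicontinuity to see the failure locus is closed, and detection of open substacks on geometric points) are exactly the routine verifications the paper leaves implicit.
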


\begin{coro}
    $U_9$ is the open locus in $\mathcal{M}_{5,9}\setminus\mathcal{M}_{5,9}^3$ where no $8$ of the $9$ points sum up to the canonical bundle.
\end{coro}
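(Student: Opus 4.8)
The plan is to reduce surjectivity of the evaluation map to a single cohomological vanishing on the curve $C$, and then to resolve that vanishing by a degree count together with a short argument about base loci of low-degree linear systems.

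First I would record the geometry of $C$. Since $C$ is a complete intersection of three quadrics in $\mathbb{P}^4$, adjunction gives $\omega_C\cong\mathcal{O}_C(1)$ and $g(C)=5$, so $\mathbb{P}^4$ is the canonical space of $C$ and $\mathcal{O}_C(2)\cong\omega_C^{\otimes 2}$. Because $\Gamma\subset C$, the evaluation map factors as $H^0(\mathbb{P}^4,\mathcal{O}(2))\to H^0(C,\omega_C^{\otimes 2})\to H^0(\Gamma,\mathcal{O}(2)|_\Gamma)$. Max Noether's theorem applies since a genus-$5$ canonical curve cut out by quadrics is neither hyperelliptic nor trigonal, so the first arrow is surjective and it suffices to treat the second. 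Taking cohomology of $0\to\omega_C^{\otimes 2}(-\Gamma)\to\omega_C^{\otimes 2}\to\omega_C^{\otimes 2}|_\Gamma\to 0$, surjectivity of the second arrow follows from $H^1(\omega_C^{\otimes 2}(-\Gamma))=0$, which by Serre duality is equivalent to $H^0(\omega_C^{-1}(\Gamma))=0$; note $\deg\omega_C^{-1}(\Gamma)=n-8$.

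For $n\le 7$ this degree is negative and the vanishing is automatic. For $n=8$ the bundle has degree $0$, so it has a nonzero section exactly when it is trivial, i.e. when $\mathcal{O}_C(p_1+\cdots+p_8)\cong\omega_C$ — equivalently, the eight points form a hyperplane section. For $n=9$ the bundle $\omega_C^{-1}(\Gamma)$ has degree $1$; on a curve of positive genus a degree-one line bundle has a nonzero section iff it equals $\mathcal{O}_C(p)$ for a unique point $p$, so non-surjectivity is equivalent to $\mathcal{O}_C(p_1+\cdots+p_9)\cong\omega_C(p)$ for some $p\in C$. The crux is that $p$ is forced to be one of the marked points: $\omega_C$ is base-point free and Riemann--Roch gives $h^0(\omega_C(p))=5=h^0(\omega_C)$, so multiplication by the tautological section of $\mathcal{O}_C(p)$ identifies $H^0(\omega_C)$ with $H^0(\omega_C(p))$, whence every effective divisor in $|\omega_C(p)|$ contains $p$. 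But $|\omega_C(p)|=|\mathcal{O}_C(p_1+\cdots+p_9)|$, whose base locus is contained in $\{p_1,\dots,p_9\}$, so $p=p_i$ for some $i$, and after relabeling $\mathcal{O}_C(p_1+\cdots+p_8)\cong\omega_C$. Conversely any such relation among eight of the nine points produces a section and breaks surjectivity, giving the asserted ``nine points fail iff eight of them fail''.

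I expect the $n=9$ base-locus step to be the only real subtlety; the remaining cases are a Serre-duality degree count. Two hypotheses deserve a careful check: that every curve in $\mathcal{M}_{5,n}\setminus\mathcal{M}_{5,n}^3$ really is a complete intersection of three quadrics under its canonical embedding — i.e. that gonality $\ge 4$ in genus $5$ is exactly the non-hyperelliptic, non-trigonal case, so that Petri's/Max Noether's theorem applies and $\mathcal{O}(2)$ restricts from $\mathbb{P}^4$ as claimed — and that the marked points are distinct, which holds since the sections are pairwise disjoint. Finally, the corollaries identifying $U_n$ for $n\le 7$, $n=8$, and $n=9$ follow by applying the proposition on geometric fibres: surjectivity of the bundle map \eqref{eva1} may be tested fibrewise by Nakayama, and the locus in $\mathcal{M}_{5,n}\setminus\mathcal{M}_{5,n}^3$ where some eight of the sections sum to the relative canonical bundle is closed (it is where a fibrewise degree-zero line bundle becomes trivial), so its open complement is precisely $U_n$.
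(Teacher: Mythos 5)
Your proposal is correct and follows essentially the same route as the paper: the corollary is an immediate consequence of Proposition \ref{prop 0.2}, whose $n=9$ case you re-derive verbatim (Max Noether, Serre duality, the degree count $n-8$, and the base-point argument forcing $p\in\{p_1,\dots,p_9\}$), with the added and correct observation that $h^0(\omega_C(p))=5=h^0(\omega_C)$ is what makes $p$ a base point of $|\omega_C(p)|$ — a detail the paper leaves implicit. Your closing remarks on fibrewise checking via Nakayama and openness of $U_9$ match the paper's setup.
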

Define $V_n \subset (\mathbb{P}\mathcal{V})^n$ as the locus over which the evaluation map $\eqref{eva1}$ is surjective. Furthermore, we know that the image of $\mathcal{M}_{5,n}'$ under $b$ is contained in $V_n \subset (\mathbb{P}\mathcal{V})^n$. Therefore, the kernel of \eqref{eva1} over $V_n$ is a rank $15-n$ vector bundle. We denote the kernel by $\mathcal{E}$, which parametrizes tuples $(f, V, p_1, \dots, p_n)$, where $(p_1, \dots, p_n) \in (\mathbb{P}{V})^n$, $f$ is a quadratic form on $\mathbb{P}V$ with $f(p_1)= \cdots = f(p_n) = 0$. We have a natural map $G(3,\mathcal{E}) \rightarrow V_n \subset (\mathbb{P}\mathcal{V})^n$. From now on, we consider the evaluation map \eqref{eva1} over $V_n$.

\begin{prop}
    We have the following composition map: $$\mathcal{M}_{5,n}' \xrightarrow{\cong} X \subset G(3,\mathcal{E}) \rightarrow V_n \subset (\mathbb{P}\mathcal{V})^n \rightarrow BSL_5.$$
\end{prop}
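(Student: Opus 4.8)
The plan is to identify both sides as moduli stacks and exhibit mutually inverse morphisms between them. First I would \emph{define} $X$. On $G(3,\mathcal{E})$ there is the tautological rank-$3$ subbundle $\mathcal{S}\subset\mathcal{E}\subset\overline{\gamma}^*Sym^2\mathcal{V}^\vee$; regard $\mathcal{S}$ as a net of quadrics in the $\mathbb{P}^4$-bundle $\mathbb{P}\mathcal{V}$ and let $\mathcal{C}\subset\mathbb{P}\mathcal{V}$ be its relative base locus, i.e.\ the common zero scheme of the three quadrics. Let $X\subset G(3,\mathcal{E})$ be the locus over which (i) the marked points $p_1,\dots,p_n$ are pairwise distinct and (ii) $\mathcal{C}\to S$ is smooth of relative dimension $1$. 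Condition (i) is open, being pulled back from the complement of the diagonals in $(\mathbb{P}\mathcal{V})^n$; condition (ii) is open because the fiber dimension $\dim\mathcal{C}_s$ is upper semicontinuous and cannot drop below $1$ (three quadrics in $\mathbb{P}^4$ always meet in dimension $\geq 1$), and on the resulting open set $\mathcal{C}$ is a relative complete intersection, so smoothness of the fibers is open by properness. Over $X$ the fibers $\mathcal{C}_s$ are smooth connected $(2,2,2)$-complete intersection curves in $\mathbb{P}^4$, so $\omega_{\mathcal{C}_s}\cong\mathcal{O}_{\mathcal{C}_s}(1)$ by adjunction and, as $\deg\mathcal{O}_{\mathcal{C}_s}(1)=8=2g-2$, they have genus $5$ and are canonically embedded; being cut out scheme-theoretically by quadrics, they are neither hyperelliptic nor trigonal, hence of gonality $\geq 4$. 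The quadrics in $\mathcal{S}$ vanish at the $p_i$, so the $p_i$ lie on $\mathcal{C}$ and yield $n$ pairwise-disjoint sections. This produces a morphism $X\to\mathcal{M}_{5,n}'$, whose composite with $X\hookrightarrow G(3,\mathcal{E})\to V_n\hookrightarrow(\mathbb{P}\mathcal{V})^n\to BSL_5$ is the structure map.

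For the inverse $\mathcal{M}_{5,n}'\to X$: given an object $C\hookrightarrow\mathbb{P}V\to S$ with sections $\sigma_1,\dots,\sigma_n$, the sections determine a map $S\to(\mathbb{P}\mathcal{V})^n$ which factors through $V_n$ — this is the independence-of-conditions hypothesis built into $U_n$, and the excerpt already records that the image of $b$ lies in $V_n$. Over $V_n$ I would take $\pi_*\mathcal{I}_{C/\mathbb{P}V}(2)$ and claim it is a rank-$3$ subbundle of $\pi_*\mathcal{O}_{\mathbb{P}V}(2)=Sym^2V^\vee$ contained in $\mathcal{E}$. Fiberwise, each $C_s\hookrightarrow\mathbb{P}^4$ is a non-hyperelliptic, non-trigonal canonical curve of genus $5$, so by Petri's theorem it is a complete intersection of $3$ quadrics; hence $h^0(\mathbb{P}^4,\mathcal{I}_{C_s}(2))=3$ is constant and $h^1(\mathbb{P}^4,\mathcal{I}_{C_s}(2))=0$, and cohomology and base change makes $\pi_*\mathcal{I}_{C/\mathbb{P}V}(2)$ locally free of rank $3$ with locally free quotient in $Sym^2V^\vee$, i.e.\ a subbundle; it lands in $\mathcal{E}$ because its sections vanish at the $p_i$. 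This yields $S\to G(3,\mathcal{E})$ whose image lies in $X$, since the relative base locus of this net of quadrics is $C$ (a smooth complete intersection is cut out scheme-theoretically by its minimal-degree equations).

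That the two morphisms are mutually inverse and compatible with the projections to $(\mathbb{P}\mathcal{V})^n$ and $BSL_5$ is then formal: taking the base locus of a net $\mathcal{S}$ and recovering its degree-$2$ part returns $\mathcal{S}$, and the degree-$2$ ideal of $C$ has relative base locus $C$, both by the same scheme-theoretic statement; and in either direction the underlying point of $(\mathbb{P}\mathcal{V})^n$ is $(\sigma_1,\dots,\sigma_n)$ and the map to $BSL_5$ classifies $V$. The step I expect to be the genuine obstacle is the claim that $\pi_*\mathcal{I}_{C/\mathbb{P}V}(2)$ is a \emph{subbundle} of $\mathcal{E}$ of rank exactly $3$: this is exactly where the classification of canonical genus-$5$ curves (Enriques--Babbage--Petri) enters, forcing the fiberwise number of quadrics through $C_s$ to be the constant $3$ with vanishing $H^1$ so that cohomology and base change applies and the inclusion has locally free cokernel. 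The remaining ingredients — openness of the conditions defining $X$, the adjunction genus computation, and the scheme-theoretic "cut out in minimal degree" facts for smooth complete intersections — are standard.
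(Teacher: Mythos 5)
Your proposal is correct and follows essentially the same strategy as the paper: construct the universal $(2,2,2)$-complete-intersection curve over $G(3,\mathcal{E})$ with its $n$ sections, take $X$ to be the locus of smooth fibers, and invert by sending a canonically embedded curve to its net of quadrics, which is a rank-$3$ subbundle of $\mathcal{E}$. The only cosmetic difference is that you produce that subbundle as $\pi_*\mathcal{I}_{C/\mathbb{P}V}(2)$ via Petri and cohomology-and-base-change, whereas the paper realizes it as the kernel of $Sym^2(f_*\omega_f\otimes\mathcal{L}_1)\rightarrow f_*\bigl((\omega_f\otimes f^*\mathcal{L}_1)^{\otimes 2}\bigr)$, with surjectivity supplied by Max Noether's theorem.
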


\begin{proof} It remains to show that $\mathcal{M}_{5,n}'$ is isomorphic to an open stack of $G(3,\mathcal{E})$. The basic idea is to construct maps from $\mathcal{M}_{5,n}'$ to $G(3,\mathcal{E})$ and from an open set $X \subset G(3,\mathcal{E})$ to $\mathcal{M}_{5,n}'$. For the first map, each point in $\mathcal{M}_{5,n}'$ is a genus 5 curve with $n$ marked points; we map it to the 3 dimensional subspace of spaces of quadrics vanishing along the curve and therefore vanishing along the $n$ points. For the second map, given a three dimension space of quadrics which vanish at $n$ points and intersect transversely, their common vanishing locus gives a curve with genus 5 with the $n$ marked points. This is the first time we construct this map, so we will give a rigorous proof as follows.

Recall that we have the exact sequence 
\begin{equation}
0 \rightarrow \mathcal{E} \rightarrow \overline{\gamma}^*\gamma_*\mathcal{O}_{\mathbb{P}\mathcal{V}}(2) \rightarrow \bigoplus_{i=1}^n \eta_i^*\mathcal{O}_{\mathbb{P}\mathcal{V}}(2) \rightarrow 0 .    
\label{2}\end{equation}
Denote the composition map $\eta_i \circ b$ by $b_i$, the universal curve over $\mathcal{M}_{5,n}'$ by $\mathcal{C}_n$, the structure map $\mathbb{P}(p^*\mathcal{V}) \rightarrow \mathcal{M}_{5,n}'$ by $a$, and the embedding $\mathcal{C}_n \hookrightarrow \mathbb{P}(p^*\mathcal{V})$ by $j'$. On the universal curve $\mathcal{C}_n$, we have the line bundles $j'^*\mathcal{O}_{\mathbb{P}(p^*\mathcal{V})}(1)$ and $\omega_f$, which are the same on fibers. Therefore, they differ by a line bundle, that is, $j'^*\mathcal{O}_{\mathbb{P}(p^*\mathcal{V})}(1) = \omega_f \otimes f^*\mathcal{L}_1$. By the push pull formula, we get $$f_*\omega_f\otimes \mathcal{L}_1=f_*j'^*\mathcal{O}_{\mathbb{P}(p^*\mathcal{V})}(1)= a_*j'_*j'^*\mathcal{O}_{\mathbb{P}(p^*\mathcal{V})}(1) =p^*\mathcal{V}^\vee.$$ Therefore, we get $c_1(\mathcal{L}_1)=-\frac{1}{5}\lambda_1$. Taking the symmetric square shows $$b^*\overline{\gamma}^*\gamma_*\mathcal{O}_{\mathbb{P}\mathcal{V}}(2)= b^*\eta_i^*\gamma^*\gamma_*\mathcal{O}_{\mathbb{P}\mathcal{V}}(2)=p^*Sym^2\mathcal{V}^\vee=Sym^2(f_*\omega_f\otimes \mathcal{L}_1).$$ Furthermore, we consider the exact sequence 
\begin{equation}
    0 \rightarrow S \rightarrow Sym^2(f_*\omega_f \otimes \mathcal{L}_1) \rightarrow f_*((\omega_f \otimes f^*\mathcal{L}_1)^{\otimes 2})=f_*(\omega_f^{\otimes2})\otimes \mathcal{L}_1^{\otimes 2} \rightarrow 0.
\label{SES}
\end{equation}
\break
\textit{Claim.} $b_i^*\mathcal{O}_{\mathbb{P}\mathcal{V}}(2)=\sigma_i^*((\omega_f\otimes f^*\mathcal{L}_1)^{\otimes 2})$.
\\
\textit{Proof of claim.} 
Denote $j' \circ \sigma_i$ by $\eta_i'$ and consider the diagram 
\[\begin{tikzcd}
& \mathbb{P}(f_*\omega_f^\vee)  \arrow[dr,"g"] \\
\mathcal{C}_n \arrow[ur,hook,"\iota"] \arrow[rr,hook,"j'"] \arrow[dr,"f"]
&& \mathbb{P}(p^*\mathcal{V}) \arrow[dl,bend left = 15, "a"'] \arrow[r]
& \mathbb{P}\mathcal{V} \arrow[dl] \\
&\mathcal{M}_{5,n}' \arrow[ul,bend left,"\sigma_i"] \arrow[ur, bend left = 16, "\eta_i'"] \arrow[r]
& BSL_5
\end{tikzcd}\]
We have the isomorphism $g: \mathbb{P}(f_*\omega_f^\vee) \rightarrow \mathbb{P}(p^*\mathcal{V})$, which induces an isomorphism of line bundles $g^*\mathcal{O}_{\mathbb{P}(p^*\mathcal{V})}(2)=g^*a^*\mathcal{L}_1^{\otimes 2} \otimes \mathcal{O}_{\mathbb{P}(f_*\omega_f^\vee)}(2)$.
We have $$\begin{aligned}
b_i^*\mathcal{O}_{\mathbb{P}\mathcal{V}}(2) = \eta_i'^* \mathcal{O}_{\mathbb{P}(p^*\mathcal{V})}(2) &= \sigma_i^*\iota^*g^*\mathcal{O}_{\mathbb{P}(p^*\mathcal{V})}(2) \\&=
\sigma_i^*\iota^*(g^*a^*\mathcal{L}_1^{\otimes 2} \otimes \mathcal{O}_{\mathbb{P}(f_*\omega_f^\vee)}(2)) \\&=\sigma_i^*(\omega_f^{\otimes2}) \otimes \mathcal{L}_1^{\otimes2} \\&= \sigma_i^*(\omega_f \otimes f^*\mathcal{L}_1)^{\otimes2}.
\end{aligned}$$
This completes the proof of the claim. \\
Note that the evaluation map $f_*((\omega_f \otimes f^*\mathcal{L}_1)^{\otimes2}) \rightarrow \displaystyle{\bigoplus_{i=1}^n \sigma_i^*((\omega_f \otimes f^*\mathcal{L}_1)^{\otimes2})}$ is surjective. Therefore, we have $S \subset b^*\mathcal{E}$. By the universal property of the Grassmannian, this corresponds to a unique map $\mathcal{M}_{5,n}' \rightarrow G(3, \mathcal{E})$.
We consider the diagram 
\[\begin{tikzcd}
	{\mathcal{P:=} G(3, \mathcal{E}) \times_{BSL_5} \mathbb{P}\mathcal{V}} && {\mathbb{P}\mathcal{V}} \\
	\\
	{G(3,\mathcal{E})}
	\arrow["{\pi_2}", from=1-1, to=1-3]
	\arrow["{\pi_1}", from=1-1, to=3-1].
\end{tikzcd}\]
Let $\mathcal{S}$ be the universal subbundle of $G(3,\mathcal{E})$. We then have a morphism on $\mathcal{P}$: \begin{equation}
    \pi_1^* \mathcal{S} \otimes \pi_2^* \mathcal{O}_{\mathbb{P}\mathcal{V}}(-2) \rightarrow \pi_1^* \mathcal{E} \otimes \pi_2^* \mathcal{O}_{\mathbb{P}\mathcal{V}}(-2) \rightarrow \mathcal{O}_\mathcal{P} \label{5}
\end{equation}
where the first map arises from the tautological sequence on $G(3, \mathcal{E})$ and the second is obtained from multiplying forms. Take $\mathcal{C}$ to be the vanishing locus of \eqref{5}. Then we get an embedding $j'' : \mathcal{C} \hookrightarrow \mathcal{P}$. Moreover, we have the normal bundle $\mathcal{N}_{\mathcal{C}/\mathcal{P}} = \pi_1^* \mathcal{S}^\vee \otimes \pi_2^* \mathcal{O}_{\mathbb{P}\mathcal{V}}(2).$ By our construction, the fiber of $\pi_1$ restricted to $\mathcal{C}$ is the locus of the intersection of three quadrics. Next, we show that $ \mathcal{C} \rightarrow G(3, \mathcal{E})$ has $n$ sections. The identity map between $G(3, \mathcal{E})$ together with the composition $G(3, \mathcal{E}) \rightarrow V_n \subset (\mathbb{P}\mathcal{V})^n \xrightarrow{\eta_i} \mathbb{P}\mathcal{V}$ induce $n$ sections of $\pi_1$,
$$\sigma_i: G(3, \mathcal{E}) \rightarrow \mathcal{P=} G(3, \mathcal{E}) \times_{BSL_5} \mathbb{P}\mathcal{V}.$$
These sections factor through $\mathcal{C}$ by our construction. Indeed, the fiber of $G(3, \mathcal{E})$ over $\{p_1, \dots, p_n\} \in V_n$ is the subspace spanned by three quadrics which vanish at the $p_i$. The fiber of $\mathcal{C} \rightarrow G(3, \mathcal{E})$ (over the subspace spanned by three quadrics) is the vanishing locus of these three quadrics. 

Let $X$ be the open locus where the fiber of $\mathcal{C} \rightarrow G(3, \mathcal{E})$ is a smooth curve, which is the complete intersection of three quadrics. More explicitly, we consider the projection map $\pi_1: G(3,\mathcal{E})\times \mathbb{P}\mathcal{V}  \rightarrow G(3, \mathcal{E})$ and define a closed subset $W$ in $G(3,\mathcal{E})\times \mathbb{P}\mathcal{V}$ \begin{multline}
W:=\bigg\{(h_1,h_2,h_3,p)\in G(3,\mathcal{E})\times \mathbb{P}\mathcal{V} \; \bigg| \; h_1(p)=h_2(p)=h_3(p)=0 , \\ \text{ and all } 3 \times 3 \text{ minors of } \left(\frac{\partial h_i}{\partial x_j}\right)_{\substack{i=1,2,3 \\ j=1,2,3,4,5}} \text{ vanishes} \bigg\}.    
\end{multline}
Note that the vanishing equations give a closed condition, and the complement of $\pi_1(W)$ satisfies the property that the intersection of the three quadratics is a smooth complete intersection. Therefore, $\mathcal{M}_{5,n}'$ is isomorphic to $X \subset G(3, \mathcal{E})$. We thus have the following composition map: $$ \mathcal{M}_{5,n}' \xrightarrow{\cong} X \subset G(3,\mathcal{E}) \rightarrow V_n \subset (\mathbb{P}\mathcal{V})^n \rightarrow BSL_5.$$ 
\end{proof}

\begin{coro}
The Chow ring of $\mathcal{M}_{5,n}'$ has the same generators as the Chow ring of $G(3, \mathcal{E})$, which is generated by $c_1(\mathcal{S})$, $c_2(\mathcal{S})$, $c_3(\mathcal{S})$, $c_2(\mathcal{V})$, $c_3(\mathcal{V})$, $c_4(\mathcal{V})$, $c_5(\mathcal{V})$, and $\eta_i^*\mathcal{O}_{\mathbb{P}\mathcal{V}}(1) \text{ where }1 \leq i \leq n$.    
\end{coro}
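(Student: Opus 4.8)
The goal is to read off generators of $A^*(\mathcal{M}_{5,n}')$ from the chain of maps $\mathcal{M}_{5,n}' \xrightarrow{\cong} X \subset G(3,\mathcal{E}) \to V_n \subset (\mathbb{P}\mathcal{V})^n \to BSL_5$ established in the previous proposition. Since $X$ is an open substack of $G(3,\mathcal{E})$, the restriction map $A^*(G(3,\mathcal{E})) \to A^*(X) \cong A^*(\mathcal{M}_{5,n}')$ is surjective, so it suffices to exhibit generators of $A^*(G(3,\mathcal{E}))$ and pull them back. The plan is to build up the Chow ring in stages along the tower: first $A^*(BSL_5)$, then $A^*((\mathbb{P}\mathcal{V})^n)$, then the open locus $V_n$, then the Grassmann bundle $G(3,\mathcal{E})$.

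First I would recall that $A^*(BSL_5) = \mathbb{Q}[c_2(\mathcal{V}), c_3(\mathcal{V}), c_4(\mathcal{V}), c_5(\mathcal{V})]$ (the first Chern class vanishes since $\mathcal{V}$ has trivial determinant). Next, $\mathbb{P}\mathcal{V} \to BSL_5$ is a projective bundle, so by the projective bundle formula $A^*(\mathbb{P}\mathcal{V})$ is generated over $A^*(BSL_5)$ by $c_1(\mathcal{O}_{\mathbb{P}\mathcal{V}}(1))$; iterating over the $n$-fold fiber product gives that $A^*((\mathbb{P}\mathcal{V})^n)$ is generated by the $c_i(\mathcal{V})$ together with the $n$ classes $\eta_i^*c_1(\mathcal{O}_{\mathbb{P}\mathcal{V}}(1))$. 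Passing to the open substack $V_n \subset (\mathbb{P}\mathcal{V})^n$ only kills relations (the restriction $A^*((\mathbb{P}\mathcal{V})^n) \to A^*(V_n)$ is surjective), so $A^*(V_n)$ has the same generators. Then $G(3,\mathcal{E}) \to V_n$ is a Grassmann bundle of the rank-$(15-n)$ bundle $\mathcal{E}$ on $V_n$; by the Grassmann bundle formula $A^*(G(3,\mathcal{E}))$ is generated over $A^*(V_n)$ by the Chern classes $c_1(\mathcal{S}), c_2(\mathcal{S}), c_3(\mathcal{S})$ of the universal rank-3 subbundle. Combining these and restricting to $X$, and using $A^*(X) \cong A^*(\mathcal{M}_{5,n}')$, yields the claimed list of generators.

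The one genuinely substantive point — and the main thing that needs care rather than being routine — is the final restriction step: one must verify that $A^*(G(3,\mathcal{E})) \twoheadrightarrow A^*(X)$, i.e. that for an open immersion of (quotient) stacks the pullback on Chow groups is surjective. This is standard (excision: $A_*(Z) \to A_*(G(3,\mathcal{E})) \to A_*(X) \to 0$ is exact for $Z$ the closed complement), but I would state it explicitly since $G(3,\mathcal{E})$ is a stack rather than a scheme and one is implicitly invoking the equivariant Chow theory of Edidin–Graham / Totaro together with the approximation by global quotients. I would also note in passing that the tautological sequence identifies $c_i(\mathcal{E})$ with polynomials in the $c_i(\mathcal{S})$ and the $c_i$ of the quotient $\bigoplus \eta_i^*\mathcal{O}_{\mathbb{P}\mathcal{V}}(2)$, so no new generators are needed from $\mathcal{E}$ itself — its Chern classes are already expressible in terms of the listed ones. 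No delicate computation of relations is required for this corollary; only generation is asserted.
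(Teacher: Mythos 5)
Your argument is correct and is exactly the (unwritten) proof the paper intends: generators of $A^*(BSL_5)$, the projective and Grassmann bundle formulas along the tower, and surjectivity of restriction to the open substacks $V_n$ and $X$ via excision. The paper states this as an immediate corollary of the preceding proposition, so your write-up simply supplies the standard details it leaves implicit.
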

\medskip

\begin{lemma}
  The classes $c_2(\mathcal{V})$, $c_3(\mathcal{V})$, $c_4(\mathcal{V})$, $c_5(\mathcal{V})$, and $\eta_i^*\mathcal{O}_{\mathbb{P}\mathcal{V}}(1)$ are tautological on $\mathcal{M}_{5,n}'$. 
\end{lemma}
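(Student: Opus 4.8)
The plan is to read off both assertions directly from the line-bundle identities already established in the proof of the preceding proposition, so that no new geometry is needed. Recall from there the isomorphism $p^{*}\mathcal{V}^{\vee}\cong f_{*}\omega_{f}\otimes\mathcal{L}_{1}$ of bundles on $\mathcal{M}_{5,n}'$ (where $f_{*}\omega_{f}$ is the Hodge bundle, with $c_{i}(f_{*}\omega_{f})=\lambda_{i}$), the computation $c_{1}(\mathcal{L}_{1})=-\tfrac15\lambda_{1}$, and the Claim $b_{i}^{*}\mathcal{O}_{\mathbb{P}\mathcal{V}}(2)\cong\sigma_{i}^{*}\bigl((\omega_{f}\otimes f^{*}\mathcal{L}_{1})^{\otimes 2}\bigr)$. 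Since $A^{*}(\mathcal{M}_{5,n}')\cong A^{*}(U_{n})$ and $U_{n}$ is open in $\mathcal{M}_{5,n}$, "tautological on $\mathcal{M}_{5,n}'$" means lying in the subring generated by the restrictions of the $\psi$, $\kappa$ (equivalently $\lambda$) classes; in particular each $\lambda_{i}$ is tautological, and that is all I will feed in.

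For the classes $c_{k}(\mathcal{V})$, $k=2,\dots,5$: pulling back along $p$ and dualizing gives $c_{k}(\mathcal{V})=(-1)^{k}c_{k}(\mathcal{V}^{\vee})=(-1)^{k}c_{k}(f_{*}\omega_{f}\otimes\mathcal{L}_{1})$. Applying the standard formula for the Chern classes of the tensor product of a rank-$5$ bundle with a line bundle,
$$c_{k}(f_{*}\omega_{f}\otimes\mathcal{L}_{1})=\sum_{i=0}^{k}\binom{5-i}{\,k-i\,}\,\lambda_{i}\,\bigl(c_{1}(\mathcal{L}_{1})\bigr)^{k-i}=\sum_{i=0}^{k}\binom{5-i}{\,k-i\,}\,\lambda_{i}\,\Bigl(-\tfrac{\lambda_{1}}{5}\Bigr)^{k-i},$$
which exhibits each $c_{k}(\mathcal{V})$ as a polynomial in $\lambda_{1},\dots,\lambda_{5}$, hence tautological. (For $k=1$ the same formula returns $\lambda_{1}+5\cdot(-\lambda_{1}/5)=0$, consistent with $c_{1}(\mathcal{V})=0$ on $BSL_{5}$, which is precisely why the twist by $\mathcal{L}_{1}$ is forced.)

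For $\eta_{i}^{*}\mathcal{O}_{\mathbb{P}\mathcal{V}}(1)$: the class in question on $\mathcal{M}_{5,n}'$ is its pullback along $b_{i}=\eta_{i}\circ b$. By the Claim, $b_{i}^{*}\mathcal{O}_{\mathbb{P}\mathcal{V}}(2)\cong\sigma_{i}^{*}\bigl((\omega_{f}\otimes f^{*}\mathcal{L}_{1})^{\otimes 2}\bigr)$, so taking first Chern classes and dividing by $2$, and using $f\circ\sigma_{i}=\mathrm{id}$,
$$b_{i}^{*}c_{1}\bigl(\mathcal{O}_{\mathbb{P}\mathcal{V}}(1)\bigr)=\sigma_{i}^{*}\bigl(c_{1}(\omega_{f})+f^{*}c_{1}(\mathcal{L}_{1})\bigr)=\psi_{i}+c_{1}(\mathcal{L}_{1})=\psi_{i}-\tfrac15\lambda_{1},$$
which is manifestly tautological. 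This finishes all the listed classes.

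Essentially all the content is already contained in the preceding proposition, so there is no real obstacle here; the only points that need care are bookkeeping ones — verifying the tensor-product Chern-class identity with $c_{1}(\mathcal{L}_{1})=-\tfrac15\lambda_{1}$ substituted, and checking that the comparison maps $p$ and $b_{i}$ really do intertwine the abstract bundles $\mathcal{V}$ and $\mathcal{O}_{\mathbb{P}\mathcal{V}}(1)$ on $G(3,\mathcal{E})$ with the universal-curve data on $\mathcal{M}_{5,n}'$. It is worth stressing that this lemma clears every generator of $A^{*}(G(3,\mathcal{E}))$ except $c_{1}(\mathcal{S})$, $c_{2}(\mathcal{S})$, $c_{3}(\mathcal{S})$; those three are where the genuine work lies, requiring the classification of degenerate point configurations for $n=8,9$ and the computation of the classes of the corresponding loci.
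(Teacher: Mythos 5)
Your proof is correct and follows essentially the same route as the paper: both arguments reduce everything to the fact that $p^*\mathcal{V}^\vee$ is the Hodge bundle twisted by a line bundle with first Chern class $-\tfrac15\lambda_1$ (the paper phrases this as $f_*\omega_f=\det(\mathcal{S}^\vee)\otimes\mathcal{V}^\vee$, obtained by adjunction, and invokes the splitting principle where you write out the explicit tensor-product Chern class formula), and both derive $c_1(\eta_i^*\mathcal{O}_{\mathbb{P}\mathcal{V}}(1))=\psi_i-\tfrac15\lambda_1$ by restricting along $\sigma_i$. The only difference is that you quote the identities already established in the preceding proposition rather than re-deriving them via the normal bundle of $\mathcal{C}$ in $\mathcal{P}$, which is a legitimate shortcut.
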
 
\begin{proof} Following our notation above, we have the universal diagram restricted to $X \subset G(3,\mathcal{E})$:
\[\begin{tikzcd}
	{\mathcal{C}} && {\mathcal{P}} \\
	\\
	X
	\arrow["j''", hook, from=1-1, to=1-3]
	\arrow["f", from=1-1, to=3-1]
	\arrow["{\pi_1}", from=1-3, to=3-1]
	\arrow["{\sigma_1, \dots, \sigma_n}", shift left=3, bend left, from=3-1, to=1-1]
\end{tikzcd}\]

By adjunction, we have $$\begin{aligned}
w_f&=j''^*\left(w_{\pi_1} \otimes \det \left(\pi_1^*\mathcal{S}^{\vee} \otimes \pi_2^*\mathcal{O}_{\mathbb{P}\mathcal{V}}(2)\right)\right)\\&=j''^*\left(w_{\pi_1} \otimes \det (\pi_1^*\mathcal{S}^{\vee}) \otimes \pi_2^*\mathcal{O}_{\mathbb{P}\mathcal{V}}(6)\right)\\&=j''^*\left(\pi_2^*\mathcal{O}_{\mathbb{P}\mathcal{V}}(1) \otimes \det (\pi_1^*\mathcal{S}^{\vee})\right).
\end{aligned}$$

Pushing forward by $f$, we have $$f_*(\omega_f)=\det (\mathcal{S}^{\vee}) \otimes \pi_{1*}j''_*j''^*\pi_{2}^*\mathcal{O}_{\mathbb{P}\mathcal{V}}(1)=\det (\mathcal{S}^{\vee}) \otimes \mathcal{V}^{\vee}.$$

By taking the first Chern class, we see that $\lambda_1=c_1\left(f_*(\omega_f)\right)=5c_1(\det (\mathcal{S}^{\vee}))$. Taking higher Chern classes and using the splitting principle shows that $c_2(\mathcal{V})$, $c_3(\mathcal{V})$, $c_4(\mathcal{V})$, $c_5(\mathcal{V})$ are polynomials in the $\lambda$ classes. Meanwhile, pulling back by $\sigma_i$, we have $$\sigma_i^*w_f=\det (\mathcal{S}^{\vee})\otimes \sigma_i^*j''^*\pi_2^* \mathcal{O}_{\mathbb{P}\mathcal{V}}(1)=\det (\mathcal{S}^{\vee})\otimes\eta_i^*\mathcal{O}_{\mathbb{P}\mathcal{V}}(1).$$ Taking first Chern classes, we see that $\psi_i=c_1\left(\sigma_i^*w_f\right)=\frac{1}{5}\lambda_1+c_1\left(\eta_i^*\mathcal{O}_{\mathbb{P}\mathcal{V}}(1)\right)$. Thus $c_1\left(\eta_i^*\mathcal{O}_{\mathbb{P}\mathcal{V}}(1)\right)=\psi_i-\frac{1}{5}\lambda_1$. 
\end{proof}
\begin{lemma}
    The classes $c_1(\mathcal{S})$, $c_2(\mathcal{S})$, $c_3(\mathcal{S})$ are tautological on $\mathcal{M}_{5,n}'$.
\end{lemma}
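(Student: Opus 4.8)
The plan is to realize $c_1(\mathcal{S}), c_2(\mathcal{S}), c_3(\mathcal{S})$ — equivalently $c_1(\mathcal{S}^\vee), c_2(\mathcal{S}^\vee), c_3(\mathcal{S}^\vee)$ — in terms of Chern classes of bundles that the previous lemma has already shown to be tautological. We already know from the previous lemma that $c_1(\mathcal{S}^\vee) = \tfrac{1}{5}\lambda_1$ is tautological, so the remaining work is to express $c_2(\mathcal{S}^\vee)$ and $c_3(\mathcal{S}^\vee)$ tautologically. The key observation is that $\mathcal{S}$ is the universal rank-$3$ subbundle of $\mathcal{E}$, and $\mathcal{E}$ sits inside the exact sequence \eqref{2}, whose other two terms, $\overline{\gamma}^*\gamma_*\mathcal{O}_{\mathbb{P}\mathcal{V}}(2) = Sym^2\mathcal{V}^\vee$ and $\bigoplus_{i=1}^n \eta_i^*\mathcal{O}_{\mathbb{P}\mathcal{V}}(2)$, have Chern classes that are polynomials in $c_j(\mathcal{V})$ and $\eta_i^*c_1(\mathcal{O}_{\mathbb{P}\mathcal{V}}(1))$, all of which are tautological by the previous lemma. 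Hence $c(\mathcal{E})$ is tautological. That alone does not pin down $c(\mathcal{S})$, so one needs to use more structure.

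The cleanest route is to exploit the normal bundle computation already present in the excerpt. On $\mathcal{C}$ (the universal curve over $X \cong \mathcal{M}_{5,n}'$) we have $\mathcal{N}_{\mathcal{C}/\mathcal{P}} = \pi_1^*\mathcal{S}^\vee \otimes \pi_2^*\mathcal{O}_{\mathbb{P}\mathcal{V}}(2)$, i.e. pulling back along $j''$, the bundle $j''^*\pi_1^*\mathcal{S}^\vee \otimes j''^*\pi_2^*\mathcal{O}_{\mathbb{P}\mathcal{V}}(2)$ is the normal bundle of the universal curve in its ambient $\mathbb{P}^4$-bundle, a bundle of rank $3$ on $\mathcal{C}$. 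Tensoring by $j''^*\pi_2^*\mathcal{O}_{\mathbb{P}\mathcal{V}}(-2)$ recovers $f^*\mathcal{S}^\vee$. Since $j''^*\pi_2^*\mathcal{O}_{\mathbb{P}\mathcal{V}}(1)$ was already identified in the previous lemma as $w_f \otimes f^*(\det\mathcal{S})$, and $w_f$ is tautological, it suffices to show $c_i(\mathcal{N}_{\mathcal{C}/\mathcal{P}})$, or equivalently $f^*c_i(\mathcal{S}^\vee)$, is tautological on $\mathcal{C}$ and then push down. The Chern classes of $\mathcal{N}_{\mathcal{C}/\mathcal{P}}$ are computed from the normal bundle sequence $0 \to T_{\mathcal{C}/X} \to j''^*T_{\mathcal{P}/X} \to \mathcal{N}_{\mathcal{C}/\mathcal{P}} \to 0$, i.e. $c(\mathcal{N}_{\mathcal{C}/\mathcal{P}}) = j''^*c(T_{\mathcal{P}/X}) / c(T_{\mathcal{C}/X})$. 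Here $c(T_{\mathcal{C}/X}) = 1 - \psi$-type class (the relative tangent of a curve is $w_f^\vee$), manifestly tautological, and $T_{\mathcal{P}/X}$ is the relative tangent bundle of a $\mathbb{P}(p^*\mathcal{V})$-bundle, whose Chern classes are polynomials in $f^*c_j(\mathcal{V})$ and the relative hyperplane class $j''^*\pi_2^*c_1(\mathcal{O}_{\mathbb{P}\mathcal{V}}(1))$ via the relative Euler sequence — all tautological by the previous lemma. Therefore $c_i(\mathcal{N}_{\mathcal{C}/\mathcal{P}})$ is tautological on $\mathcal{C}$, hence so is $f^*c_i(\mathcal{S}^\vee) = c_i\bigl(\mathcal{N}_{\mathcal{C}/\mathcal{P}} \otimes j''^*\pi_2^*\mathcal{O}_{\mathbb{P}\mathcal{V}}(-2)\bigr)$.

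It remains to descend from $\mathcal{C}$ to $X$: we want $c_i(\mathcal{S}^\vee) \in A^*(X)$ tautological, knowing $f^*c_i(\mathcal{S}^\vee) \in A^*(\mathcal{C})$ is tautological. Since $f: \mathcal{C} \to X$ has a section $\sigma_1$, the pullback $f^*$ is split injective with left inverse $\sigma_1^*$, so $c_i(\mathcal{S}^\vee) = \sigma_1^* f^* c_i(\mathcal{S}^\vee)$, and $\sigma_1^*$ sends tautological classes on $\mathcal{C}$ (which are generated by $\psi$-classes pulled back, $\kappa$-classes pulled back, and the class of the diagonal-type sections $\sigma_j$) to tautological classes on $X = \mathcal{M}_{5,n}'$. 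This uses the standard fact that $\sigma_i^*\sigma_j$ is tautological (it is $\psi$-related or a product of $\psi$'s and boundary-free separation classes on $\mathcal{M}_{g,n}$) together with $\sigma_i^*$ of $\kappa$ and $\psi$ classes on the universal curve being tautological on the base. The main obstacle I expect is bookkeeping in this last descent step: one must be careful that the tautological ring of the universal curve $\mathcal{C}_n = \mathcal{C}_{5,n}$ over $\mathcal{M}_{5,n}'$ is generated by classes whose $\sigma_1^*$-images are visibly in the tautological ring of $\mathcal{M}_{5,n}'$ — this is where one invokes that $\mathcal{C}_{5,n} \cong \mathcal{M}_{5,n+1}$ and the compatibility of $\psi$, $\kappa$ classes under the forgetful map, so that pulling back by a section is the standard operation relating tautological rings in the Arbarello–Cornalba / Graber–Vakil formalism. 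Alternatively, and perhaps more cleanly, one can avoid the universal curve entirely: push the relation $c(\mathcal{S}) = c(\mathcal{E})/c(\mathcal{E}/\mathcal{S})$ and use that $\mathcal{E}/\mathcal{S}$, the universal quotient on $G(3,\mathcal{E})$, restricted to $X$ has Chern classes determined by the embedding data — but the normal-bundle approach above is the most direct since the needed identity $\mathcal{N}_{\mathcal{C}/\mathcal{P}} = \pi_1^*\mathcal{S}^\vee \otimes \pi_2^*\mathcal{O}_{\mathbb{P}\mathcal{V}}(2)$ is already in hand.
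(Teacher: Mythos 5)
Your proof is correct, but it takes a genuinely different route from the paper's. The paper stays on the base $X \cong \mathcal{M}_{5,n}'$: it applies Whitney's formula to the exact sequence \eqref{SES}, $0 \to \mathcal{S} \to Sym^2(f_*\omega_f \otimes \mathcal{L}_1) \to f_*(\omega_f^{\otimes 2}) \otimes \mathcal{L}_1^{\otimes 2} \to 0$, writes $c\bigl(Sym^2(f_*\omega_f)\bigr)$ as a polynomial in $\lambda$-classes, and invokes Grothendieck--Riemann--Roch (plus $f_*\omega_f^\vee = 0$) to show that $\mathrm{Ch}\bigl(f_*(\omega_f^{\otimes 2})\bigr)$ is tautological. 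You instead work upstairs on the universal curve: you compute $c(\mathcal{N}_{\mathcal{C}/\mathcal{P}})$ from the relative tangent sequence together with the relative Euler sequence for $\mathbb{P}(p^*\mathcal{V}) \to X$, untwist by $j''^*\pi_2^*\mathcal{O}_{\mathbb{P}\mathcal{V}}(-2)$ to recover $f^*c_i(\mathcal{S}^\vee)$, and descend via $\sigma_1^* f^* = \mathrm{id}$. What your route buys is that it bypasses GRR and the $Sym^2$ Chern-class formula entirely, at the cost of a passage through the universal curve. One remark: the descent step you flag as the ``main obstacle'' is in fact immediate and needs none of the machinery you gesture at. Tracing through your computation, $f^*c_i(\mathcal{S}^\vee)$ is a polynomial only in classes of the form $f^*\alpha$ (with $\alpha$ among $c_j(\mathcal{V})$ and $\lambda_1$, already tautological on $X$ by the previous lemma) and $c_1(\omega_f)$; applying $\sigma_1^*$ sends these to $\alpha$ and $\psi_1$ respectively, so no appeal to the structure of the tautological ring of $\mathcal{C}_{5,n} \cong \mathcal{M}_{5,n+1}$ or to classes of the sections $\sigma_j$ is required.
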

\begin{proof}
By our short exact sequence \eqref{SES} and Whitney's Formula, we have $$c(\mathcal{S})=\frac{c\left(Sym^2(f_*w_f\otimes \mathcal{L}_1)\right)}{c\left(f_*(w_f^{\otimes 2}) \otimes \mathcal{L}_1^{\otimes 2}\right)}.$$  By Example 5.16 in \cite{3264}, $c\left(Sym^2f_*(w_f)\right)$ can be written in $\lambda_1$ and $\lambda_2$. By the Grothendieck Riemann-Roch Theorem, we have \begin{equation}
Ch\left(f_*(w_f^{\otimes 2})\right)-Ch\left(f_*(w_f^{\vee})\right)=f_*\left[Ch(w_f^{\otimes 2}) \cdot Td(w_f^{\vee})\right]. \label{GRR}   
\end{equation} Since $h^0(w_f^{\vee})=0$, we get $f_*(w_f^{\vee})=0$ by Grauert's Theorem. Moreover, any class in $\eqref{GRR}$ can be expressed in tautological classes. We thus have that $Ch\left(f_*(w_f^{\otimes 2})\right)$ all tautological. Therefore, $c(\mathcal{S})$ is tautological. 
\end{proof}
\begin{coro}
    The Chow ring $A^*(U_n)$ is tautological for $n \leq 12$. 
\label{coro25}
\end{coro}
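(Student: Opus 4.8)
The plan is to obtain the statement as an immediate consequence of the structural results just established, so the argument is short. Recall first that $\mathcal{M}_{5,n}' \to U_n$ is a $\mu_5$ gerbe, so that $A^*(U_n) \cong A^*(\mathcal{M}_{5,n}')$; since this gerbe carries the universal curve with its sections on $U_n$ to the one on $\mathcal{M}_{5,n}'$, the isomorphism identifies the tautological subring of $A^*(U_n)$ with that of $A^*(\mathcal{M}_{5,n}')$. It therefore suffices to show that $A^*(\mathcal{M}_{5,n}')$ is generated as a $\mathbb{Q}$-algebra by tautological classes.

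Next I would invoke the preceding corollary. Because $\mathcal{M}_{5,n}' \cong X$ is an \emph{open} substack of $G(3,\mathcal{E})$, the restriction map $A^*(G(3,\mathcal{E})) \to A^*(\mathcal{M}_{5,n}')$ is surjective (localization/excision for Chow groups of stacks). And $A^*(G(3,\mathcal{E}))$ is generated over $A^*(BSL_5) = \mathbb{Q}[c_2(\mathcal{V}),\dots,c_5(\mathcal{V})]$ by the classes $\eta_i^*\mathcal{O}_{\mathbb{P}\mathcal{V}}(1)$, coming from the iterated projective bundle $(\mathbb{P}\mathcal{V})^n \to BSL_5$ restricted to the open $V_n$, together with $c_1(\mathcal{S}), c_2(\mathcal{S}), c_3(\mathcal{S})$, coming from the Grassmann bundle $G(3,\mathcal{E}) \to V_n$. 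Here we use that $\mathcal{E}$ has rank $15-n \geq 3$, which is precisely the hypothesis $n \leq 12$; for $n \geq 13$ the locus $U_n$ is empty --- $n$ distinct points lying on a complete intersection of three quadrics in $\mathbb{P}^4$ can impose at most $15-3=12$ independent conditions on quadrics --- so there is nothing to prove. Hence $A^*(\mathcal{M}_{5,n}')$ is generated by the restrictions of $c_1(\mathcal{S}), c_2(\mathcal{S}), c_3(\mathcal{S})$, $c_2(\mathcal{V}),\dots,c_5(\mathcal{V})$, and the $\eta_i^*\mathcal{O}_{\mathbb{P}\mathcal{V}}(1)$.

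Finally I would appeal to the two preceding lemmas, which say exactly that each of these generators restricts to a tautological class on $\mathcal{M}_{5,n}'$: the classes $c_i(\mathcal{V})$ and $c_1(\eta_i^*\mathcal{O}_{\mathbb{P}\mathcal{V}}(1)) = \psi_i - \tfrac15\lambda_1$ via $\lambda_1 = 5c_1(\det\mathcal{S}^\vee)$ and the splitting principle, and the classes $c_i(\mathcal{S})$ via the exact sequence \eqref{SES} together with Grothendieck--Riemann--Roch applied to $f_*(\omega_f^{\otimes 2})$. Since the tautological ring is a subring, every polynomial in these generators is tautological, so $A^*(\mathcal{M}_{5,n}')$ --- and hence $A^*(U_n)$ --- is tautological. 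I do not expect a genuine obstacle here: all the real content sits in the generator description and in the two lemmas, and what remains is formal, namely the surjectivity of restriction to an open substack and the compatibility of tautological classes under the gerbe, the latter following because $\psi$, $\kappa$ and $\lambda$ are built purely from the universal curve and its sections.
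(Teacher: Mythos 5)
Your proposal is correct and follows exactly the route the paper intends: the corollary is stated without a separate proof precisely because it is the immediate combination of the gerbe isomorphism $A^*(U_n)\cong A^*(\mathcal{M}_{5,n}')$, the generator description of $A^*(\mathcal{M}_{5,n}')$ coming from the open embedding into $G(3,\mathcal{E})$ over $V_n\subset(\mathbb{P}\mathcal{V})^n$, and the two preceding lemmas showing those generators are tautological. Your added remarks on the rank condition $15-n\geq 3$ and the emptiness of $U_n$ for $n>12$ agree with the paper's Remark following the corollary.
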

\begin{remark}
    From our definition of $U_n$, we have $U_n = \varnothing $ when $n > 12$. 
\end{remark}
\begin{coro}
    In particular, the Chow ring of $\mathcal{M}_{5,7}'$ is tautological. Therefore, the Chow ring of $\mathcal{M}_{5,7}$ is tautological.
\end{coro}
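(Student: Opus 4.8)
The plan is to obtain this corollary by assembling results already in place, not by proving anything new. For the first sentence I would argue as follows. By the corollary identifying $U_7$ with $\mathcal{M}_{5,7}\setminus\mathcal{M}_{5,7}^3$, together with Corollary~\ref{coro25} applied to $n=7\le 12$, the ring $A^*(U_7)$ is tautological. The stack $\mathcal{M}_{5,7}'$ is a $\mu_5$-gerbe over $U_7$, so the pullback $A^*(U_7)\to A^*(\mathcal{M}_{5,7}')$ is an isomorphism of $\mathbb{Q}$-algebras; since the universal curve, its $n$ sections, and hence all $\psi$- and $\kappa$-classes on $\mathcal{M}_{5,7}'$ are pulled back from $U_7$, this isomorphism carries the tautological subring onto the tautological subring. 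Hence $A^*(\mathcal{M}_{5,7}')$ is tautological. This is simply the $n=7$ instance of the mechanism developed in Section~2, with the pleasant feature that $U_7$ is already all of $\mathcal{M}_{5,7}\setminus\mathcal{M}_{5,7}^3$, so no locus of ``points summing to the canonical bundle'' needs to be removed.

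For the second sentence I would run the excision argument indicated in the introduction. Let $\iota\colon \mathcal{M}_{5,7}^3\hookrightarrow \mathcal{M}_{5,7}$ be the closed embedding of the gonality-$\le 3$ locus and $j\colon \mathcal{M}_{5,7}\setminus\mathcal{M}_{5,7}^3 = U_7 \hookrightarrow \mathcal{M}_{5,7}$ the complementary open embedding, so that localization gives the exact sequence
\[
A_*(\mathcal{M}_{5,7}^3)\xrightarrow{\ \iota_*\ } A_*(\mathcal{M}_{5,7})\xrightarrow{\ j^*\ } A_*(U_7)\longrightarrow 0 .
\]
By Canning--Larson \cite{HS}, every class on $\mathcal{M}_{5,7}$ supported on the hyperelliptic locus is tautological (valid for $n\le 16$) and every class supported on the trigonal locus is tautological (valid for $n\le 12$); since $\mathcal{M}_{5,7}^3$ is the union of these two loci, $\operatorname{Im}(\iota_*)$ lies in the tautological subring. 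Now take an arbitrary $\alpha\in A^*(\mathcal{M}_{5,7})$. Its restriction $j^*\alpha\in A^*(U_7)$ is tautological by the previous paragraph, so $j^*\alpha=j^*\beta$ for some tautological $\beta\in A^*(\mathcal{M}_{5,7})$ (the classes $\psi_i,\kappa_\ell$ generating the tautological ring of $U_7$ are restrictions of the corresponding classes on $\mathcal{M}_{5,7}$). Then $\alpha-\beta\in\ker j^* = \operatorname{Im}(\iota_*)$, which is tautological, so $\alpha$ is tautological; hence $A^*(\mathcal{M}_{5,7})$ is tautological.

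As for difficulties: for this particular corollary there is essentially no obstacle, since it is a bookkeeping consequence of Corollary~\ref{coro25}, the gerbe comparison $A^*(\mathcal{M}_{5,7}')\cong A^*(U_7)$, and the Canning--Larson input on the hyperelliptic and trigonal loci. The only points needing a sentence of care are (i) that the $\mu_5$-gerbe pullback identifies the two tautological subrings, which is immediate from compatibility of the universal curves and their sections, and (ii) that $\operatorname{Im}(\iota_*)$ is tautological, which follows by combining the hyperelliptic ($n\le 16$) and trigonal ($n\le 12$) cases of \cite{HS}. The genuinely hard work of the paper — classifying the configurations of $8$ or $9$ points that fail to impose independent conditions on quadrics, and showing that the corresponding loci in $\mathcal{M}_{5,8}$ and $\mathcal{M}_{5,9}$ have tautological fundamental classes and tautological Chow rings — lies in the $n=8,9$ cases, not here.
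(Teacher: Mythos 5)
Your proposal is correct and follows essentially the same route as the paper: identify $A^*(\mathcal{M}_{5,7}')\cong A^*(U_7)$ via the $\mu_5$-gerbe and invoke Corollary~\ref{coro25}, then excise the gonality-$\le 3$ locus using the Canning--Larson results that classes supported on the hyperelliptic and trigonal loci are tautological. The only cosmetic difference is that you run excision in a single step from $\mathcal{M}_{5,7}^3$ (asserting $\operatorname{Im}(\iota_*)$ is tautological ``by combining'' the two inputs), whereas the paper writes out the two-stage localization sequences --- first removing $\mathcal{M}_{5,7}^2$, then $\mathcal{M}_{5,7}^3\setminus\mathcal{M}_{5,7}^2$ --- which is precisely the verification your combination step implicitly requires.
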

\begin{proof}Consider the following two exact sequences:
\[\begin{tikzcd}
A^*(\mathcal{M}_{5,7}^2) \arrow[r,"i_*"] & A^*(\mathcal{M}_{5,7}) \arrow[r] & A^*(\mathcal{M}_{5,7}\setminus\mathcal{M}_{5,7}^2) \arrow[r] \arrow[dl,equal] & 0 \\
A^*(\mathcal{M}_{5,7}^3\setminus\mathcal{M}_{5,7}^2) \arrow[r,"i'_*"] & A^*(\mathcal{M}_{5,7}\setminus\mathcal{M}_{5,7}^2) \arrow[r] & A^*(\mathcal{M}_{5,7}\setminus\mathcal{M}_{5,7}^3) \arrow[r] & 0 
\end{tikzcd}\]
where $\mathcal{M}_{g,n}^k$ is the locus of curves of gonality $\leq k$.
By Theorem 6.1 and Lemma 9.9 in \cite{HS}, and Proposition 1 in \cite{T}, we have that the images of the pushforwards $i_*$ and $i'_*$ are tautological. Together with our conclusion that $A^*(\mathcal{M}_{5,7}')=A^*(\mathcal{M}_{5,7}\setminus\mathcal{M}_{5,7}^3)$ is tautological, we conclude that $A^*(\mathcal{M}_{5,7})$ is tautological. 
\end{proof}

As a consequence, we obtain an alternative argument that $A^*(\mathcal{M}_{5,7})$ is tautological, which was originally proven by Samir Canning and Hannah Larson in \cite{HS}.

However, for $n=8$ and $9$, we cannot yet conclude that the Chow ring of $\mathcal{M}_{5,n}$ is tautological, because the locus $\mathcal{M}_{5,n} \setminus \mathcal{M}_{5,n}^3$ is not a Grassmann bundle over some open substack of the $n$-fold fiber product of the universal $\mathbb{P}^4$-fibration over $BPGL_5$. Indeed, certain configurations of $n$ points in $\mathbb{P}^4$ don't impose independent conditions on quadrics, even though there are smooth canonical curves with genus 5 passing through these $n$ points. So for $n=8$ and $n=9$, we need to prove that the loci of such marked curves have fundamental classes and Chow rings that are tautological.

\section{Classes supported on $(\mathcal{M}_{5,8} \setminus \mathcal{M}_{5,8}^3)\setminus U_8$}

By Proposition \ref{prop 0.2}, we know that $(\mathcal{M}_{5,8} \setminus \mathcal{M}_{5,8}^3)\setminus U_8$ parametrizes smooth curves of genus 5 with 8 marked points, such that the 8 marked points are the complete intersection of three quadrics and a hyperplane. Therefore, we define $\mathcal{M}_\omega:= (\mathcal{M}_{5,8} \setminus \mathcal{M}_{5,8}^3) \setminus U_8$ (as a substack) to be the locus where the evaluation map $f_*\omega_f \rightarrow \displaystyle{\bigoplus_{i=1}^8\sigma_i^*\omega_f}$ drops rank. We denote the universal curve over $\mathcal{M}_\omega$ by $\mathcal{C}_w$.
\begin{prop}
    The fundamental class $[\mathcal{M}_\omega]$ is tautological.
    \label{prop04}
\end{prop}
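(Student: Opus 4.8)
The plan is to exhibit $\mathcal{M}_\omega$ as a degeneracy locus of a morphism of vector bundles on the smooth stack $\mathcal{M}_{5,8}\setminus\mathcal{M}_{5,8}^3$, and then read off its class from the Thom--Porteous formula. Here the sheaves $f_*\omega_f$ and $\bigoplus_{i=1}^8\sigma_i^*\omega_f$ are locally free of ranks $5$ and $8$, and by definition $\mathcal{M}_\omega$ is the locus where the evaluation morphism $\varphi\colon f_*\omega_f\to\bigoplus_{i=1}^8\sigma_i^*\omega_f$ drops rank. At a point $(C,p_1,\dots,p_8)$ the kernel of $\varphi$ is $H^0\bigl(C,\omega_C(-p_1-\cdots-p_8)\bigr)$; since $\omega_C(-\sum p_i)$ has degree $0$, this space has dimension at most $1$, and it is nonzero exactly when $\omega_C\cong\mathcal{O}_C(\sum p_i)$. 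Hence $\varphi$ has generic rank $5$, the rank-$\le 3$ locus is empty, and $\mathcal{M}_\omega$ is precisely the rank-$\le 4$ locus. Provided this locus has the expected codimension $(5-4)(8-4)=4$, Thom--Porteous yields
$$[\mathcal{M}_\omega]=c_4\!\left(\bigoplus_{i=1}^8\sigma_i^*\omega_f-f_*\omega_f\right),$$
which is the degree-$4$ part of $\prod_{i=1}^8(1+\psi_i)\cdot\bigl(1+\lambda_1+\lambda_2+\lambda_3+\lambda_4+\lambda_5\bigr)^{-1}$ --- a polynomial with rational coefficients in $\psi$ and $\lambda$ classes, hence tautological. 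So the proposition reduces to the codimension claim.

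To establish the codimension claim I would argue by two inequalities. The bound $\operatorname{codim}\mathcal{M}_\omega\le 4$ is automatic, since the rank-$\le 4$ locus of a map of bundles of ranks $5$ and $8$ is locally cut out by the $5\times 5$ minors of a $5\times 8$ matrix of functions. For $\operatorname{codim}\mathcal{M}_\omega\ge 4$ I would use the morphism $\mathcal{M}_\omega\to\mathcal{M}_5\setminus\mathcal{M}_5^3$ forgetting the marked points: the base has dimension $12$, and the fibre over a curve $[C]$ consists of the ordered $8$-tuples of distinct points of $C$ whose sum is canonical, i.e. the orderings of the reduced hyperplane sections of the canonically embedded $C\subset\mathbb{P}^4$ (cf. Corollary \ref{u8}); this fibre is finite over an open subscheme of $|\omega_C|\cong\mathbb{P}^4$, so it has dimension $4$. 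Consequently every component of $\mathcal{M}_\omega$ has dimension at most $16$, giving $\operatorname{codim}\mathcal{M}_\omega\ge 4$. Combining the two bounds, $\mathcal{M}_\omega$ is pure of codimension $4$, and it is nonempty (a general hyperplane section of a general non-trigonal canonical genus-$5$ curve gives a point of it), so Thom--Porteous applies on $\mathcal{M}_{5,8}\setminus\mathcal{M}_{5,8}^3$ and computes $[\mathcal{M}_\omega]$ as above.

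I expect the codimension verification --- specifically the upper bound on $\dim\mathcal{M}_\omega$ --- to be the one place where genuine geometry enters: it comes down to the classical fact that for a fixed non-trigonal genus-$5$ curve the canonical series $|\omega_C|$ is a $4$-dimensional family of divisors, each reduced member of which has only finitely many orderings, so that no unexpectedly large family of $8$-tuples summing to $\omega_C$ can occur. The remaining ingredients are formal. If one prefers not to invoke Thom--Porteous directly on a Deligne--Mumford stack, one can pull $\varphi$ back along a smooth atlas, or alternatively describe $\mathcal{M}_\omega$ inside a fibre product of projective bundles over $BSL_5$ in the spirit of Section 2 and obtain $[\mathcal{M}_\omega]$ as a Chern class of a tautological bundle there; in each case the outcome is a polynomial in the $\psi_i$ and $\lambda_j$.
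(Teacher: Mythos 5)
Your proposal is correct and follows essentially the same route as the paper: identify $\mathcal{M}_\omega$ as the expected-codimension-$4$ degeneracy locus of the evaluation map $f_*\omega_f\to\bigoplus_{i=1}^8\sigma_i^*\omega_f$, verify the dimension count via the forgetful map to $\mathcal{M}_5$ (fibres of dimension $4$ inside the canonical series over a $12$-dimensional base), and apply Porteous to get the degree-$4$ part of $\prod_i(1+\psi_i)/(1+\lambda_1+\cdots+\lambda_5)$, which is manifestly tautological. Your additional remarks (emptiness of the rank-$\le 3$ locus and the automatic upper bound on the codimension) are correct refinements of the same argument.
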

\begin{proof} Recall that $\mathcal{M}_\omega$ is the locus where the map $f_*w_f \rightarrow \displaystyle{\bigoplus_{i=1}^8\sigma_i^*\omega_f}$ drops rank. Note that the cycle $[\mathcal{M}_\omega]$ has codimension 4 in $\mathcal{M}_{5,8} \setminus \mathcal{M}_{5,8}^3$. Indeed, consider the forgetful map $\mathcal{M}_\omega \rightarrow \mathcal{M}_5$; the fiber corresponds to a general element in the complete linear series of the canonical. Therefore, the dimension of $\mathcal{M}_\omega$ is $12+4=16$, which implies that $\mathcal{M}_\omega$ has codimension 4. Thus we can use Porteous' formula. Using the same notation as Theorem 12.4 in \cite{3264}, we have \begin{equation}
    [\mathcal{M}_\omega]
    =\Delta_4^1\left[\frac{(1+\psi_1t)\cdots (1+\psi_8t)}{1+\lambda_1t+\cdots + \lambda_5t^5}\right]
    =\Biggl\{\frac{(1+\psi_1t)\cdots (1+\psi_8t)}{1+\lambda_1t+\cdots + \lambda_5t^5}\Biggr\}^4. 
\label{3}\end{equation}
All terms involved in \eqref{3} are tautological, thus $[\mathcal{M}_\omega]$ is tautological. 
\end{proof}

It thus suffices to prove the Chow ring of $\mathcal{M}_\omega$ is generated by restriction of tautological classes on $\mathcal{M}_{5,8} \setminus \mathcal{M}_{5,8}^3$.

Note that in the locus $\mathcal{M}_\omega$, we have the further condition that the marked points lie on a hyperplane in $\mathbb{P}^4$. Therefore, we consider the following exact sequences:
\begin{equation}
    0 \rightarrow f_*\omega_f(-\sigma_1-\cdots-\sigma_8) \rightarrow f_*\omega_f \rightarrow \mathcal{G}' \rightarrow 0.
    \label{exact1}
\end{equation}
We denote the line bundle $f_*\omega_f(-\sigma_1-\cdots-\sigma
_8)$ by $\mathcal{L}$ for simplicity. Tensoring the above exact sequence \eqref{exact1} with $\mathcal{L}^\vee$, we get the normalized exact sequence:
\begin{equation}
0 \rightarrow \mathcal{O}_{\mathcal{M}_\omega} \rightarrow \mathcal{L}^\vee \otimes f_*\omega_f \rightarrow \mathcal{L}^\vee \otimes \mathcal{G}' \rightarrow 0.
\label{nexact}
\end{equation}
We denote $\mathcal{L}^\vee \otimes \mathcal{G}'$ by $\mathcal{G}$, and the natural map $\mathcal{M}_\omega \rightarrow \mathbb{P}\mathcal{G}^\vee$ corresponding to $\sigma_i$ by $\eta_i'.$ For the next step, we are going to parametrize these 8 points using $(\mathbb{P}^3)^7$, since in nice cases the eighth point is uniquely determined by the first seven points. 
\medskip
\begin{lemma}
    If 8 points $p_1, \dots, p_8$ are the complete intersection of 3 quadrics in $\mathbb{P}^3$, then $p_1, \dots, p_7$ must impose independent conditions on quadrics in $\mathbb{P}^3$.
\label{lemma31}
\end{lemma}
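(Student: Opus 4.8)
The plan is to reduce this to the Cayley–Bacharach property of zero-dimensional complete intersections, combined with a short Koszul computation. Work in $\mathbb{P}^3$, where the space of quadrics $H^0(\mathbb{P}^3,\mathcal{O}(2))$ has dimension $10$. For any seven points, the quadrics vanishing at all of them form a linear system of dimension at least $3$, and the seven points impose independent conditions on quadrics precisely when this dimension equals $3$. So it suffices to show that every quadric through $p_1,\dots,p_7$ already lies in the span of $Q_1,Q_2,Q_3$, where $Z=\{p_1,\dots,p_8\}=Q_1\cap Q_2\cap Q_3$.

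First I would record that $h^0(\mathbb{P}^3,\mathcal{I}_Z(2))=3$, i.e. the span of $Q_1,Q_2,Q_3$ is the full space of quadrics through $Z$. Since $Z$ is $8$ points, the three quadrics meet in dimension $0$ and hence form a regular sequence, so $\mathcal{I}_Z$ has the Koszul resolution $0\to\mathcal{O}(-6)\to\mathcal{O}(-4)^{3}\to\mathcal{O}(-2)^{3}\to\mathcal{I}_Z\to 0$; twisting by $\mathcal{O}(2)$ and taking cohomology (using $H^1=H^2=0$ for all line bundles on $\mathbb{P}^3$) identifies $H^0(\mathcal{I}_Z(2))$ with $H^0(\mathcal{O}^{3})$, a $3$-dimensional space spanned by $Q_1,Q_2,Q_3$.

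Next I would invoke the Cayley–Bacharach theorem for complete intersections: a zero-dimensional complete intersection of hypersurfaces of degrees $d_1,\dots,d_n$ in $\mathbb{P}^n$ has property $CB(s)$ with $s=\sum_i d_i-n-1$ (see Eisenbud–Green–Harris). Here $n=3$ and $d_1=d_2=d_3=2$, so $s=2$ and $Z$ has property $CB(2)$: any quadric containing seven of the eight points of $Z$ contains the eighth. Applying this with the omitted point $p_8$, the quadrics through $p_1,\dots,p_7$ coincide with the quadrics through all of $Z$, which form a $3$-dimensional system by the previous paragraph. Hence $p_1,\dots,p_7$ impose independent conditions on quadrics, as claimed.

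The only delicate point is getting the Cayley–Bacharach index right ($s=2$) and checking that $Z$ is a genuine complete intersection rather than merely a degree-$8$ subscheme containing the marked points — but this is automatic once the common zero locus of the three quadrics is finite. An alternative, citation-free route is residuation on the quartic curve $C=Q_1\cap Q_2$: a quadric through $p_1,\dots,p_7$ restricts on $C$ to an effective divisor in $|\mathcal{O}_C(2)|$ of the form $p_1+\dots+p_7+q$, which is linearly equivalent to $Q_3|_C=p_1+\dots+p_8$ only if $q=p_8$; however, making this rigorous when $C$ is singular or reducible requires extra care, so I would present the Cayley–Bacharach argument instead.
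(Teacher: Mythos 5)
Your proof is correct, but it takes a genuinely different route from the paper. The paper argues by classification: it first determines all configurations of $7$ points in $\mathbb{P}^3$ that fail to impose independent conditions on quadrics (all seven coplanar; six on a plane conic; four collinear), and then checks that in each case every quadric through the points must contain a positive-dimensional locus (a plane, a conic, or a line), so no three quadrics through them can intersect in a zero-dimensional complete intersection. You instead invoke Cayley--Bacharach with $s=\sum d_i - n - 1 = 2$, which says exactly that a quadric through seven of the eight points of $Z$ contains the eighth, and combine it with the Koszul computation $h^0(\mathcal{I}_Z(2))=3$ to conclude that the seven points impose $10-3=7$ conditions; both ingredients are applied correctly (the three quadrics cut out a codimension-$3$ subscheme of $\mathbb{P}^3$, hence form a regular sequence, and the reducedness of $Z$ needed to speak of ``all but one point'' is part of the hypothesis). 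Your argument is shorter, avoids the case analysis, and generalizes immediately to other complete intersections; the paper's argument is more elementary and self-contained, and its explicit list of degenerate $7$-point configurations makes transparent the geometric reason each one is excluded. One small remark: the paper's proof also feeds into the observation, made immediately afterwards, that $p_8$ is uniquely determined by $p_1,\dots,p_7$ (so that the relevant locus embeds openly in $(\mathbb{P}^3)^7$); your approach yields this too, since the base locus of the net $H^0(\mathcal{I}_Z(2))=H^0(\mathcal{I}_{\{p_1,\dots,p_7\}}(2))$ is $Z$ itself, but it is worth stating explicitly if your proof is to replace the paper's.
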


\begin{proof}
We will first find all the cases where 7 points do not impose independent conditions on quadrics. Then we will go through each of these cases, and see whether they can be the subset of a complete intersection of 3 quadrics.\\
Let $H$ be the plane containing the maximum number of $p_1, \cdots, p_7$. Denote the set of points lying on $H$ by $\Delta$ and the set of points not in $H$ by $\Sigma$.
\[\begin{tikzpicture}
  \filldraw [red] (1,1.3) circle (1.5pt);
  \filldraw [red] (3,1.7) circle (1.5pt);
  \filldraw [red] (2,2.1) circle (1.5pt);
  \filldraw [blue] (0,-0.7) circle (1.5pt);
  \filldraw [blue] (1.2,0.1) circle (1.5pt);
  \filldraw [blue] (1.9,-0.9) circle (1.5pt);
  \filldraw [blue] (2.7,-0.1) circle (1.5pt);
  \coordinate (A) at (-1.5,-1.5);
  \coordinate (B) at (3,-1.2);
  \coordinate (C) at (5,0.8);
  \coordinate (D) at (0.5,0.5);
  \draw (A) -- (B) -- (C) -- (D) -- cycle;
  \node at (5,-0.3) {$H \cong \mathbb{P}^2$};
  \node at (3.7,1.7) {$\Biggl\}$};
  \node at (4.1,1.7) {$\Sigma$};
  \node at (3.3, -0.1) {$\Biggl\}$};
  \node at (3.7,-0.1) {$\Delta$};
\end{tikzpicture}\]

We have the following exact sequence 
\begin{equation}
    0 \rightarrow \mathcal{I}_{\Sigma}(1) \rightarrow \mathcal{I}_{\Sigma \cup \Delta}(2) \rightarrow \mathcal{I}_{\Delta/H}(2) \rightarrow 0.
    \label{SigmaDelta}
\end{equation}

Therefore, $p_1, \cdots, p_7$ impose independent conditions on quadrics in $\mathbb{P}^3$ if $\Sigma$ is in general linear position and $\Delta$ impose independent conditions on quadrics in $\mathbb{P}^2$.

\textbf{Case 1}. [$H$ contains 7 points]: 7 points don't impose independent conditions on quadrics in $\mathbb{P}^2$ for dimension reasons, so they don't impose independent conditions on quadrics in $\mathbb{P}^3$. 

\textbf{Case 2}. [$H$ contains 6 points]: 7 points don't impose independent conditions quadrics in $\mathbb{P}^3$ if and only if the 6 points lying on $H$ don't impose independent conditions quadrics in $\mathbb{P}^2$, which happens if and only if these 6 points lie on a plane conic.

\textbf{Case 3}. [$H$ contains 5 points]: Since any 2 points will be in general linear position, 7 points don't impose independent conditions quadrics in $\mathbb{P}^3$ if and only if the 5 points lying on $H$ don't impose independent conditions quadrics in $\mathbb{P}^2$, which happens if and only if 4 of these 5 points are collinear.

\textbf{Case 4}. [$H$ contains 4 points]: By our choice of $H$, any 4 points cannot be collinear. Denote the points lying on $H$ by $p_1, p_2, p_3, p_4$, and without loss of generality $p_1, p_2, p_3$ are not collinear. Furthermore, we may assume that the remaining 3 points are not collinear. In fact, if the remaining 3 points are collinear, we can change $H$ to the plane spanned by $p_4$ and the remaining 3 points. By doing this, the new $H'$ we get has $p_1, p_2, p_3$ not collinear. Any 3 points are in linear general position if they are not collinear. Any 4 points which are not collinear impose independent conditions on quadrics on $\mathbb{P}^2$. Therefore, in this case 7 points will always impose independent conditions on quadrics in $\mathbb{P}^3$. 

\textbf{Case 5}. [$H$ contains 3 points]: By our choice of $H$, any 3 points cannot be collinear and any 4 points cannot be coplanar. Thus the 3 points on $H$ impose independent conditions on quadrics in $\mathbb{P}^2$ and points in $\Sigma$ are in linear general position. So in this case, 7 points will always impose independent conditions on quadrics in $\mathbb{P}^3$.

We have found all the necessary conditions when the 7 points don't impose independent conditions on quadrics in $\mathbb{P}^3$, and it is clear that they are also sufficient.

In summary, all the cases that the seven points in $\mathbb{P}^3$ don't impose independent conditions are the following:\\
$\left(1\right)$ All the seven points are coplanar.\\
$\left(2\right)$ Six of the seven points lie on a plane conic.\\
$\left(3\right)$ Four points are collinear.

If 4 points are collinear, then every quadric vanishing along these 4 points must vanish along the line; if 6 of 7 points lie on a plane conic but no 4 of them are collinear, then every quadric vanishing on these 6 points must vanish along the conic; if all 7 points are coplanar and no 6 of the 7 points lie on a plane conic, then every quadric vanishing on these 7 points must vanish along the plane. In each of these three cases, $Q_1 \cap Q_2 \cap Q_3$ cannot be a complete intersection.

Therefore, we have 

\begin{tikzpicture}
\draw (-3, 0) node{$\Big\{(p_1, \cdots, p_8) \in (\mathbb{P}^3)^8: p_1, \cdots, p_8 \text{ is the complete intersection of 3 quadrics in } \mathbb{P}^3 \Big\}$};
\draw (-2.3, -0.5) -- (-2.3, -1.5);
\draw (-2.2, -0.5) -- (-2.2, -1.5);
\draw (-3.6, -2.1) node{$(p_1, \cdots, p_8) \in (\mathbb{P}^3)^8: p_1, \cdots, p_8 \text{ is the complete intersection  of 3 quadrics}   $};
\draw (-3, -3) node{  in $\mathbb{P}^3$ and $p_1, \cdots, p_7$ impose independent conditions on quadrics };
\draw (4.4, -2.5) node{$\overset{open}\subset(\mathbb{P}^3)^7$};
\draw (3.3, -2.5) node{$\Bigg\}$};
\draw (-10.4, -2.5) node{$\Bigg\{$};
\end{tikzpicture}

The final open embedding sends $(p_1, \cdots, p_8)$ to $(p_1, \cdots, p_7) \in \mathbb{P}^3$. This is an open embedding because $p_8$ is uniquely determined by $p_1, \cdots, p_7$.
\end{proof}

Take $G \subset PGL_5$ to be the stabilizer of the hyperplane. After a proper choice of coordinates, $G$ is a subgroup of $PGL_5$ consisting of matrices of the form 
$$ 
\begin{bNiceArray}{c|cccc}[margin,columns-width=auto]
  1 & 0  & 0 & 0 & 0\\
\hline
\text{*} & \Block{4-4}<\Large>{GL_4} &&& \\
\text{*} \\ 
\text{*} \\ 
\text{*}

\end{bNiceArray}.$$

We denote the universal bundle over $BG$ by $\mathcal{F}'$. Observe that $\mathcal{G}^\vee$ defined after the exact sequence \eqref{nexact} corresponds to the subbundle $\mathcal{F} \subset \mathcal{F}'$, which is generated by the constant sections $(0,1,0,0,0),(0,0,1,0,0),(0,0,0,1,0),(0,0,0,0,1)$. Therefore, we have the commutative diagram 
\[\begin{tikzcd}
\mathbb{P}\mathcal{F} \arrow[rr,hookrightarrow,"i"] \arrow[dr,"\gamma"'] && \mathbb{P}\mathcal{F}' \arrow[dl, "\gamma'"] \\
& BG
\end{tikzcd}\]

Take the projective bundle $\mathbb{P}\mathcal{F}$ over $BG$ and its fiber product $(\mathbb{P}\mathcal{F})^7$ over $BG$. We have the composition map $(\mathbb{P}\mathcal{F})^7 \xrightarrow{\eta_i} \mathbb{P}\mathcal{F} \xrightarrow{\gamma} BG$, where $\eta_i$ is the $i$-th projection map and $\gamma$ is the canonical map. Note that by the definition of the fiber product, we have $\gamma \circ \eta_i = \gamma \circ \eta_j$ for any $i,j$. Thus we denote $\gamma \circ \eta_i$ by $\overline{\gamma}$. Moreover, we have the natural map $b_{\omega}: \mathcal{M}_\omega \rightarrow (\mathbb{P}\mathcal{F})^7 $, mapping the curve with eight marked points to the first seven points which lie on a hyperplane in $\mathbb{P}^4$.

Consider the following evaluation map:
\begin{equation}
 \overline{\gamma}^*\gamma'_*\mathcal{O}_{\mathbb{P}\mathcal{F}'}(2) \rightarrow \bigoplus^7_{i=1}\eta_i^*i^*\mathcal{O}_{\mathbb{P}\mathcal{F}'}(2)
 \label{eva2}
\end{equation}
 
Define $V'$ to be the open locus in $(\mathbb{P}\mathcal{F})^7$ such that the evaluation map \eqref{eva2} is surjective. By Lemma \ref{lemma31}, we have that the image of $b_\omega$ is contained in $V'$.
From now on, we consider the evaluation map \eqref{eva2} over $V'$.\\
Let $\mathcal{E}$ be the kernel of the map \eqref{eva2}. Since \eqref{eva2} is surjective, we know that $\mathcal{E}$ is a vector bundle. 
\begin{prop}
We have $\mathcal{M}_\omega \overset{\text{open}}\subset G(3,\mathcal{E})$.    
\end{prop}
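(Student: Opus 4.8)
The plan is to mirror the argument already carried out for $\mathcal{M}_{5,n}' \cong X \subset G(3,\mathcal{E})$ in the previous proposition, but now fibered over $BG$ instead of $BSL_5$, using the hyperplane $\mathbb{P}\mathcal{F}' \supset \mathbb{P}\mathcal{F}$ in place of $\mathbb{P}\mathcal{V}$. First I would construct a map $\mathcal{M}_\omega \to G(3,\mathcal{E})$. The key input is that on $\mathcal{M}_\omega$ the canonical curve $\mathcal{C}_w$ is cut out by three quadrics, and since the eight marked points lie on a hyperplane $\mathbb{P}\mathcal{F}'$, restricting these quadrics to that hyperplane gives a three-dimensional subspace of $H^0(\mathbb{P}^3, \mathcal{O}(2))$ vanishing on the eight points $p_1,\dots,p_8$; by Lemma~\ref{lemma31} the first seven of them already impose independent conditions on quadrics in $\mathbb{P}^3$, so $b_\omega$ lands in $V'$ and the pullback $b_\omega^*\mathcal{E}$ is a rank-$10$ bundle on $\mathcal{M}_\omega$. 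One checks as before (via the analogue of the short exact sequence \eqref{SES}, now built from $\mathcal{G}$ rather than $f_*\omega_f$) that the rank-$3$ subbundle of quadrics on the hyperplane vanishing along $\mathcal{C}_w$ sits inside $b_\omega^*\mathcal{E}$, and the universal property of $G(3,\mathcal{E})$ produces the desired morphism $\mathcal{M}_\omega \to G(3,\mathcal{E})$.

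Next I would build the inverse over the appropriate open locus. Exactly as in the previous proposition, form $\mathcal{P} := G(3,\mathcal{E}) \times_{BG} \mathbb{P}\mathcal{F}'$ with its universal subbundle $\mathcal{S}$, write down the tautological evaluation map $\pi_1^*\mathcal{S} \otimes \pi_2^*\mathcal{O}_{\mathbb{P}\mathcal{F}'}(-2) \to \mathcal{O}_\mathcal{P}$, and let $\mathcal{C}$ be its zero locus; over a point of $G(3,\mathcal{E})$ the fiber of $\mathcal{C}$ is the intersection of the corresponding three quadrics \emph{inside the hyperplane} $\mathbb{P}^3 \subset \mathbb{P}^4$. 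The seven sections of $V' \subset (\mathbb{P}\mathcal{F})^7 \to BG$ induce seven sections of $\mathcal{C} \to G(3,\mathcal{E})$; an eighth section is produced by the residual point: three quadrics in $\mathbb{P}^3$ meeting properly cut out eight points, seven of which are the given ones. Then let $X' \subset G(3,\mathcal{E})$ be the open locus where the eight-point scheme is reduced, in linear general position appropriately, and is the complete intersection of a smooth genus-$5$ canonical curve (in $\mathbb{P}^4$) with the hyperplane — i.e. where the three quadrics on $\mathbb{P}^3$ lift to three quadrics on $\mathbb{P}^4$ whose intersection with the hyperplane is these eight points and which themselves cut out a smooth canonical curve through them. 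Over $X'$ one reconstructs the curve with its eight marked points, giving the inverse morphism, and openness follows as before from the fact that smoothness/transversality/reducedness are open conditions (a Jacobian-minor locus, as in the $W$ of the previous proof). Together these show $\mathcal{M}_\omega \cong X' \overset{\text{open}}\subset G(3,\mathcal{E})$.

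The main obstacle I expect is bookkeeping the passage between quadrics on $\mathbb{P}^4$ and quadrics on the hyperplane $\mathbb{P}\mathcal{F}' \subset \mathbb{P}\mathcal{V}$: one must verify that restriction along $\mathbb{P}\mathcal{F}' \hookrightarrow \mathbb{P}\mathcal{V}$ induces, at the level of the evaluation sequences, an identification of the rank-$3$ bundle of restricted quadrics with a subbundle of $\mathcal{E}$, and conversely that the three quadrics appearing as a point of $G(3,\mathcal{E})$ lift (non-canonically, but the curve they cut out does not depend on the lift modulo the ideal of the hyperplane once one also remembers the eighth point) to quadrics defining the canonical curve — here the bundle $\mathcal{G}$ and the normalized sequence \eqref{nexact} do exactly the work that $f_*\omega_f \otimes \mathcal{L}_1$ did before, and one has to track the twist by $\mathcal{L}$ carefully. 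The transversality/smoothness bookkeeping defining $X'$ is routine once the $n=8$ analogue of the closed set $W$ is written down, so the genuinely new content is just this linear-algebra comparison between the two projective bundles; everything else is a faithful repetition of the argument already given for $\mathcal{M}_{5,n}'$.
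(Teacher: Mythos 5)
Your overall architecture (map to $G(3,\mathcal{E})$ via the universal property, inverse via the universal complete intersection over $G(3,\mathcal{E})\times_{BG}\mathbb{P}\mathcal{F}'$, openness via a Jacobian-minor locus) is the same as the paper's, but there is a genuine error at the heart of your construction: you have misidentified $\mathcal{E}$. By definition, $\mathcal{E}$ is the kernel of the evaluation map \eqref{eva2}, whose source $\overline{\gamma}^*\gamma'_*\mathcal{O}_{\mathbb{P}\mathcal{F}'}(2)$ is the rank-$15$ bundle of quadrics on the ambient $\mathbb{P}^4$-bundle $\mathbb{P}\mathcal{F}'$ (note also that the hyperplane is $\mathbb{P}\mathcal{F}$, not $\mathbb{P}\mathcal{F}'$ as you write). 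Evaluating at the seven points gives a rank-$8$ kernel (not rank $10$), and a point of $G(3,\mathcal{E})$ is a net of quadrics on $\mathbb{P}^4$ vanishing at seven points of the hyperplane --- not a net of quadrics on $\mathbb{P}^3$. Your first paragraph instead sends a marked curve to its space of \emph{restricted} quadrics in $H^0(\mathbb{P}^3,\mathcal{O}(2))$, and your second paragraph reconstructs the curve by lifting these back to $\mathbb{P}^4$, asserting that the resulting curve ``does not depend on the lift.'' That assertion is false: if $x_0$ cuts out $H$ and $\ell$ is a general linear form, replacing a defining quadric $Q$ of the canonical curve by $Q+x_0\ell$ leaves both the restriction to $H$ and the eight points unchanged, but changes the curve. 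A net of quadrics on the hyperplane through the eight points does not determine the genus-$5$ curve, so the reconstruction step of your argument cannot be completed as stated (a dimension count confirms this: the locus of nets of quadrics on $\mathbb{P}^3$ through seven such points is far too small to dominate the $16$-dimensional stack $\mathcal{M}_\omega$, whereas $G(3,\mathcal{E})$ with $\mathcal{E}$ of rank $8$ has exactly the right dimension).

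The fix is that no restriction or lifting occurs anywhere. The rank-$3$ subbundle $\mathcal{S}\subset b_\omega^*\mathcal{E}$ is the bundle of quadrics on $\mathbb{P}^4$ containing the canonical curve, exhibited exactly as in Section 2 via the identification $b_\omega^*\overline{\gamma}^*\gamma'_*\mathcal{O}_{\mathbb{P}\mathcal{F}'}(2)=Sym^2(f_*\omega_f\otimes\mathcal{L}^\vee)$ and the sequence $0\to\mathcal{S}\to Sym^2(f_*\omega_f\otimes\mathcal{L}^\vee)\to f_*\bigl((\omega_f\otimes f^*\mathcal{L}^\vee)^{\otimes 2}\bigr)\to 0$; Lemma \ref{lemma31} enters only to guarantee that $b_\omega$ lands in $V'$, where $\mathcal{E}$ is locally free. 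With this correction the rest of your second paragraph is essentially what the paper does: over $\mathcal{P}=G(3,\mathcal{E})\times_{BG}\mathbb{P}\mathcal{F}'$ the universal net cuts out the curve in $\mathbb{P}^4$ directly, the seven given points furnish seven sections, the eighth marked point is recovered as the residual point of the intersection of the curve with $\mathbb{P}\mathcal{F}$, and one restricts to the open locus where the net is a smooth complete intersection and the eight points are distinct.
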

\begin{proof} We first construct a map from $\mathcal{M}_\omega$ to $G(3,\mathcal{E})$ by the universal property of the Grassmannian.  Recall that we have the composition maps:
\[\begin{tikzcd}
\mathcal{M}_\omega \arrow[r,"b_\omega"] &(\mathbb{P}\mathcal{F})^7 \arrow[r,"\eta_i"] &\mathbb{P}\mathcal{F} \arrow[rr,hookrightarrow,"i"]\arrow[dr,"\gamma"'] &&\mathbb{P}\mathcal{F}'\arrow[dl,"\gamma'"]\\
&&&BG
\end{tikzcd}\]
We then consider the following diagram:
\[\begin{tikzcd}
0 \arrow[r] & b_\omega^*\mathcal{E} \arrow[r] & b_\omega^*\overline{\gamma}^*\gamma'_*\mathcal{O}_{\mathbb{P}\mathcal{F}'}(2) \arrow[r]\arrow[d,equal] & b_\omega^*{\displaystyle \bigoplus_{i=1}^7 \eta_i^*i^*\mathcal{O}_{\mathbb{P}\mathcal{F}'}(2)} \arrow[r] & 0 \\
0 \arrow[r] & \mathcal{S} \arrow[r] & Sym^2(f_*\omega_f \otimes \mathcal{L}^\vee) \arrow[r] & f_*\left((\omega_f \otimes f^*\mathcal{L}^\vee)^{\otimes 2}\right) \arrow[r] \arrow[u, "\text{surjective}"'] & 0
\end{tikzcd}\]
The above diagram shows that $\mathcal{S}$ is a subbundle of $b_\omega^*\mathcal{E}$, and thus gives a map $\mathcal{M}_\omega \rightarrow G(3,\mathcal{E})$. Furthermore, $\mathcal{M}_\omega$ is isomorphic to an open locus $W$ in $G(3,\mathcal{E})$. And the fibers of $W$ are nets of quadrics in $H^0(\mathcal{O}_{\mathbb{P}^4}(2))$, such that the intersection of the basis is a complete intersection. 
\end{proof}

In summary, we have the following composition maps:
\begin{equation}
    \mathcal{M}_\omega \rightarrow G(3,\mathcal{E}) \rightarrow V' \subset (\mathbb{P}\mathcal{F})^7 \xrightarrow{\eta_i} \mathbb{P}\mathcal{F} \xrightarrow{\gamma} BG \xrightarrow{h} BGL_4,
\label{composition}
\end{equation}
where $h$ is induced by the natural group homomorphism $G \rightarrow GL_4$. Furthermore, $h$ induces an isomorphism between $A^*(BG)$ and $A^*(BGL_4)$.
By our construction, the composition map $\mathcal{M}_\omega  \rightarrow BGL_4$ corresponds to the vector bundle $\mathcal{G}^\vee$ on $\mathcal{M}_\omega$.
\begin{coro}
    From the composition map \eqref{composition}, we have that the Chow ring $A^*(\mathcal{M}_\omega)$ is generated by $c_1(\mathcal{S}), c_2(\mathcal{S}), c_3(\mathcal{S}), c_1(b_\omega^*\eta_i^*\mathcal{O}_{\mathbb{P}\mathcal{F}}(1)) (1\leq i \leq 7)$ and $c_j(\mathcal{G}) (1 \leq j \leq 4)$.
    \label{coro05}
\end{coro}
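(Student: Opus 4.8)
The plan is to read the generators off directly from the chain of maps \eqref{composition}, using three standard facts about the Chow rings of the stacks in play (each of finite type and stratified by global quotient stacks, so that these facts apply): an open immersion induces a surjection on Chow rings; the projective bundle formula expresses $A^*$ of a $\mathbb{P}$-bundle as a module over the base generated by powers of $c_1$ of the relative $\mathcal{O}(1)$; and the Grassmann bundle formula expresses $A^*$ of $G(k,\mathcal{E})$ as a module over the base generated by monomials in the Chern classes of the universal subbundle. By the preceding proposition $\mathcal{M}_\omega$ is isomorphic to an open substack $W \subseteq G(3,\mathcal{E})$, so the restriction map $A^*(G(3,\mathcal{E})) \to A^*(\mathcal{M}_\omega)$ is surjective; hence it suffices to produce generators of $A^*(G(3,\mathcal{E}))$ and restrict them.

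To that end I would work up \eqref{composition} from the bottom. The isomorphism $h \colon A^*(BG) \xrightarrow{\sim} A^*(BGL_4)$ identifies $A^*(BG)$ with the polynomial ring on the Chern classes of the universal rank-$4$ bundle, which along the composite $\mathcal{M}_\omega \to BGL_4$ pulls back to $\mathcal{G}^\vee$; thus these classes restrict (up to sign) to $c_1(\mathcal{G}), \dots, c_4(\mathcal{G})$. Applying the projective bundle formula to $\gamma \colon \mathbb{P}\mathcal{F} \to BG$ and iterating over the seven factors of the fiber product, $A^*((\mathbb{P}\mathcal{F})^7)$ is generated over $A^*(BG)$ by $\eta_1^* c_1(\mathcal{O}_{\mathbb{P}\mathcal{F}}(1)), \dots, \eta_7^* c_1(\mathcal{O}_{\mathbb{P}\mathcal{F}}(1))$. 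Restricting to the open substack $V'$ changes nothing as far as generation goes, and finally the Grassmann bundle formula applied to $G(3,\mathcal{E}) \to V'$ (legitimate since $\mathcal{E}$ is a vector bundle on $V'$) adjoins $c_1(\mathcal{S}), c_2(\mathcal{S}), c_3(\mathcal{S})$. So $A^*(G(3,\mathcal{E}))$ is generated by $c_1(\mathcal{S}), c_2(\mathcal{S}), c_3(\mathcal{S})$, the seven classes $\eta_i^* c_1(\mathcal{O}_{\mathbb{P}\mathcal{F}}(1))$, and the four Chern classes of the universal bundle on $BG$.

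Pulling these back along $\mathcal{M}_\omega \hookrightarrow G(3,\mathcal{E})$ would then finish the argument: since $b_\omega$ is by construction the composite $\mathcal{M}_\omega \to G(3,\mathcal{E}) \to V' \hookrightarrow (\mathbb{P}\mathcal{F})^7$, the class $\eta_i^* c_1(\mathcal{O}_{\mathbb{P}\mathcal{F}}(1))$ restricts to $c_1(b_\omega^* \eta_i^* \mathcal{O}_{\mathbb{P}\mathcal{F}}(1))$, the universal subbundle restricts to $\mathcal{S}$ on $\mathcal{M}_\omega$, and the universal bundle on $BG$ restricts to $\mathcal{G}^\vee$, whose Chern classes generate the same subring as $c_1(\mathcal{G}), \dots, c_4(\mathcal{G})$; this is precisely the list in the statement. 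The argument is formally the same as the one producing the generators of $A^*(\mathcal{M}_{5,n}')$ earlier in the paper, and the only point needing any care — hence the main "obstacle" — is verifying that every stack occurring in \eqref{composition} lies in the class for which excision and the projective/Grassmann bundle formulas for stacky Chow groups are available, which holds here.
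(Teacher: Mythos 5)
Your proposal is correct and is essentially the argument the paper intends: the corollary is read off from the composition \eqref{composition} via the excision surjection for the open immersion $\mathcal{M}_\omega \hookrightarrow G(3,\mathcal{E})$, the Grassmann and projective bundle formulas, and the identification $A^*(BG) \cong A^*(BGL_4)$ with the universal bundle pulling back to $\mathcal{G}^\vee$. The paper states this without a separate proof, and your write-up supplies exactly the standard chain of reductions it relies on.
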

\medskip

\begin{prop}
   $c_1(\mathcal{L})= 2 \psi_i$ for any $i$. In particular, $c_1(\mathcal{L})$ is tautological and all $\psi_i$ are equal.
   
\end{prop}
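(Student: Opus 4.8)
The plan is to identify $\omega_f$ on the universal curve $\mathcal{C}_w$ with $f^*\mathcal{L}$ twisted by the eight sections, and then restrict that identity along each $\sigma_i$. The starting observation is that the line bundle $\mathcal{N} := \omega_f(-\sigma_1-\cdots-\sigma_8)$ is fiberwise trivial on $\mathcal{M}_\omega$: by the defining property of $\mathcal{M}_\omega$ (Proposition~\ref{prop 0.2} and Corollary~\ref{u8}), over any geometric point one has $\omega_C \cong \mathcal{O}_C(p_1+\cdots+p_8)$, so $\mathcal{N}|_C \cong \mathcal{O}_C$ and in particular $h^0(C,\mathcal{N}|_C)=1$ is constant in the family. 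By cohomology and base change, $\mathcal{L}=f_*\mathcal{N}$ is then a line bundle whose formation commutes with base change, and the evaluation map $f^*\mathcal{L}=f^*f_*\mathcal{N}\to\mathcal{N}$ is surjective on each fiber (a nonzero section of $\mathcal{O}_C$ is nowhere vanishing), hence an isomorphism of line bundles. This gives the key identity $\omega_f \cong f^*\mathcal{L}\otimes\mathcal{O}_{\mathcal{C}_w}(\sigma_1+\cdots+\sigma_8)$ on $\mathcal{C}_w$.

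Next I would pull this identity back along a section $\sigma_i$. The left-hand side contributes $c_1(\sigma_i^*\omega_f)=\psi_i$ by definition. On the right, $\sigma_i^*f^*\mathcal{L}=\mathcal{L}$ since $f\circ\sigma_i=\mathrm{id}$; the sections are pairwise disjoint, so $\sigma_i^*\mathcal{O}_{\mathcal{C}_w}(\sigma_j)$ is trivial for $j\neq i$; and the self-intersection (adjunction) formula for the universal curve gives $c_1(\sigma_i^*\mathcal{O}_{\mathcal{C}_w}(\sigma_i))=-\psi_i$. Adding these up yields $\psi_i = c_1(\mathcal{L})-\psi_i$, that is, $c_1(\mathcal{L})=2\psi_i$. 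Since this holds for every $i$, the classes $\psi_1,\dots,\psi_8$ all coincide on $\mathcal{M}_\omega$, and as each $\psi_i$ is tautological, so is $c_1(\mathcal{L})=2\psi_i$.

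The only step requiring genuine care is the claim that $f^*f_*\mathcal{N}\to\mathcal{N}$ is an isomorphism, rather than merely a nonzero map of line bundles; this rests squarely on the fiberwise triviality of $\mathcal{N}$ — the defining feature of $\mathcal{M}_\omega$ — together with cohomology and base change, and also implicitly confirms that $\mathcal{G}'$ in \eqref{exact1} is a genuine vector bundle of the expected rank. Everything else is the standard dictionary between sections of the universal curve and $\psi$-classes (disjointness of the $\sigma_i$, the self-intersection formula, and $f\circ\sigma_i=\mathrm{id}$), so I expect no further obstacle.
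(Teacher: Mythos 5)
Your proposal is correct and follows essentially the same route as the paper: both identify $f^*\mathcal{L}$ with $\omega_f(-\sigma_1-\cdots-\sigma_8)$ via fiberwise triviality, restrict along $\sigma_i$ using disjointness of the sections, and apply the self-intersection identity $c_1(\sigma_i^*\mathcal{O}(\sigma_i))=-\psi_i$ (equivalently $\mathcal{N}_{D_i}^{\vee}=\sigma_i^*\omega_f$) to get $c_1(\mathcal{L})=2\psi_i$. Your added justification of the cohomology-and-base-change step is a welcome elaboration of a point the paper states without proof, but it is not a different argument.
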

\begin{proof} Note that $\omega_f(-\sigma_1-\cdots -\sigma_8)$ is trivial on fibers, thus 
$\omega_f(-\sigma_1-\cdots -\sigma_8)=f^* \mathcal{L}$, where $\mathcal{L}=f_*\omega_f(-\sigma_1-\cdots -\sigma_8)$ as defined after the exact sequence \eqref{exact1}. Denote the divisor in $\mathcal{C}_\omega$ corresponding to the section $\sigma_i$ by $D_i$. We have
\begin{center}
$\begin{aligned}
\mathcal{L}= \sigma_i^*f^*\mathcal{L} &=\sigma_i^*\omega_f(-\sigma_1 - \cdots -\sigma_8) \\ & = \sigma_i^*\omega_f(-\sigma_i) && \left(\text{since }\sigma_i^*\mathcal{\omega}_f(\sigma_j)=0 \text{ for } i \neq j\right)\\& =\sigma_i^* \omega_f \otimes\mathcal{O} (-D_i)|_{D_i} \\&  =\sigma_i^* \omega_f \otimes \mathcal{N}_{D_i}^{\vee} && \left(\text{definition of normal bundle}\right)
\end{aligned}$    
\end{center}

By the conormal sequence for $\mathcal{M}_\omega \overset{\sigma_i}\hookrightarrow \mathcal{C}_\omega \xrightarrow{f} \mathcal{M}_\omega$, we have $ \mathcal{N}_{D_i}^{\vee} = \sigma_i^* \omega_f$. Therefore, we get $c_1(\mathcal{L}) = c_1(\sigma_i^*\omega_f \otimes \mathcal{N}_{D_i}^{\vee})=c_1\left((\sigma_i^*\omega_f)^{\otimes 2}\right)=2\psi_i$. 
\end{proof}

\begin{coro}
    The Chern classes $c_1(\mathcal{G}), c_2(\mathcal{G}), c_3(\mathcal{G}), \text{ and } c_4(\mathcal{G})$ are tautological.
    \label{coro06}
\end{coro}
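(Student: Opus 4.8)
The plan is to reduce $c(\mathcal{G})$ to the $\lambda$ and $\psi$ classes, both of which we already know are tautological. First I would apply Whitney's formula to the exact sequence \eqref{exact1}, that is $0 \to \mathcal{L} \to f_*\omega_f \to \mathcal{G}' \to 0$, to obtain
$$c(\mathcal{G}') = \frac{c(f_*\omega_f)}{c(\mathcal{L})} = \frac{1 + \lambda_1 + \cdots + \lambda_5}{1 + c_1(\mathcal{L})}.$$
Since the $\lambda_i$ are tautological and, by the proposition just proved, $c_1(\mathcal{L}) = 2\psi_i$ is tautological, each Chern class $c_k(\mathcal{G}')$ (with $k \le 4$, as $\mathcal{G}'$ has rank $4$) is a polynomial in tautological classes and hence tautological.

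Next, recall that $\mathcal{G} = \mathcal{L}^\vee \otimes \mathcal{G}'$ with $\mathcal{G}'$ locally free of rank $4$. Writing $t := c_1(\mathcal{L}^\vee) = -2\psi_i$ and using the formula for the Chern classes of a twist of a rank-$4$ bundle by a line bundle,
$$c_k(\mathcal{G}) = \sum_{j=0}^{k} \binom{4 - j}{k - j}\, c_j(\mathcal{G}')\, t^{\,k-j},$$
each summand is a product of tautological classes. Hence $c_1(\mathcal{G}), c_2(\mathcal{G}), c_3(\mathcal{G})$ and $c_4(\mathcal{G})$ are all tautological, as claimed.

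There is no serious obstacle in this argument: once $c_1(\mathcal{L})$ is known to be tautological, everything follows from Whitney's formula and the twist formula, and the rest is routine bookkeeping. The one point worth recording is that \eqref{exact1} is genuinely a short exact sequence of vector bundles on $\mathcal{M}_\omega$ — i.e.\ that $\mathcal{G}'$ is locally free of rank $4$ — which holds because on every curve parametrized by $\mathcal{M}_\omega$ the eight marked points form a hyperplane section, so $h^0(\omega_C(-\sigma_1 - \cdots - \sigma_8))$ is constant equal to $1$ and the inclusion $f_*\omega_f(-\sigma_1 - \cdots - \sigma_8) \hookrightarrow f_*\omega_f$ has locally free cokernel.
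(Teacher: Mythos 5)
Your proposal is correct and follows essentially the same route as the paper: apply Whitney's formula to \eqref{exact1} and use $c_1(\mathcal{L})=2\psi_i$. In fact you are slightly more careful than the paper's own proof, which writes $c(\mathcal{G})=c(f_*\omega_f)/c(\mathcal{L})$ directly (strictly this computes $c(\mathcal{G}')$), whereas you explicitly handle the twist $\mathcal{G}=\mathcal{L}^\vee\otimes\mathcal{G}'$ via the standard formula — a harmless but welcome clarification.
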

\begin{proof} By the exact sequence \eqref{exact1} and Whitney's Formula, we have $c(\mathcal{G})=\frac{c(f_*\omega_f)}{c(\mathcal{L})}$. Thus all the Chern classes of $\mathcal{G}$ are tautological. 
\end{proof}

\begin{lemma}
The classes $c_1(\mathcal{S}), c_2(\mathcal{S}), c_3(\mathcal{S})$ are tautological.
\label{lemma05}
\end{lemma}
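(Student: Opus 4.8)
The plan is to transcribe, essentially verbatim, the proof of the analogous lemma for $\mathcal{M}_{5,n}'$, with $\mathcal{L}_1$ replaced by $\mathcal{L}^\vee$. Concretely, I start from the short exact sequence
\[ 0 \to \mathcal{S} \to Sym^2(f_*\omega_f \otimes \mathcal{L}^\vee) \to f_*\bigl((\omega_f \otimes f^*\mathcal{L}^\vee)^{\otimes 2}\bigr) \to 0 \]
produced in the proof that $\mathcal{M}_\omega$ is open in $G(3,\mathcal{E})$, and apply the projection formula to rewrite the last term as $f_*(\omega_f^{\otimes 2}) \otimes \mathcal{L}^{\vee \otimes 2}$. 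Whitney's formula then gives
\[ c(\mathcal{S}) = \frac{c\bigl(Sym^2(f_*\omega_f \otimes \mathcal{L}^\vee)\bigr)}{c\bigl(f_*(\omega_f^{\otimes 2}) \otimes \mathcal{L}^{\vee \otimes 2}\bigr)}, \]
so it suffices to show that numerator and denominator are tautological.

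For the numerator: $c_i(f_*\omega_f) = \lambda_i$ are tautological by definition, and $c_1(\mathcal{L}^\vee) = -c_1(\mathcal{L}) = -2\psi_i$ is tautological by the proposition just proved; so by the splitting principle (or Example 5.16 in \cite{3264}) the Chern classes of $f_*\omega_f \otimes \mathcal{L}^\vee$, and hence of its symmetric square, are tautological. For the denominator, twisting by a line bundle whose first Chern class (here $-4\psi_i$) is tautological preserves the property of having tautological Chern classes, so I only need $Ch(f_*(\omega_f^{\otimes 2}))$ to be tautological. This is precisely the Grothendieck--Riemann--Roch computation of the earlier lemma showing $c(\mathcal{S})$ is tautological on $\mathcal{M}_{5,n}'$: on each fiber $h^0(\omega^{\otimes 2}) = 3g-3 = 12$ and $h^1(\omega^{\otimes 2}) = 0$, so $f_*(\omega_f^{\otimes 2})$ is locally free of rank $12$ and coincides with its derived pushforward; applying \eqref{GRR} (equivalently, applying GRR to $\omega_f^{\otimes 2}$ directly, using $f_*(\omega_f^\vee) = 0$ by Grauert and relative Serre duality $R^1 f_* \omega_f^\vee \cong (f_*\omega_f^{\otimes 2})^\vee$) expresses $Ch(f_*\omega_f^{\otimes 2})$ as a pushforward along $f$ of a polynomial in $c_1(\omega_f)$ and the section classes $D_i$, which is tautological. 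Combining the two, $c(\mathcal{S})$ is tautological, and in particular so are $c_1(\mathcal{S})$, $c_2(\mathcal{S})$, $c_3(\mathcal{S})$.

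I do not anticipate a real obstacle: the argument is a direct copy of the earlier lemma, the genuinely new ingredients being that $c_1(\mathcal{L})$ is tautological (already established) and that the relevant short exact sequence on $\mathcal{M}_\omega$ is the one built during the embedding into $G(3,\mathcal{E})$. The only point requiring a little care is checking the constancy of the fiberwise $h^0$'s, so that the pushforwards are honest vector bundles and cohomology-and-base-change applies --- but that is immediate for a smooth proper family of genus-$5$ curves.
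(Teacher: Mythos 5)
Your proof is correct and follows the paper's argument essentially verbatim: the same short exact sequence for $\mathcal{S}$, the projection formula to identify the quotient with $f_*(\omega_f^{\otimes 2})\otimes\mathcal{L}^{\vee\otimes 2}$, Grothendieck--Riemann--Roch for $f_*(\omega_f^{\otimes 2})$, and Whitney's formula, with the new ingredient being that $c_1(\mathcal{L})=2\psi_i$ is tautological. The extra details you supply (splitting principle for the symmetric square, cohomology and base change) are all consistent with the paper's intent.
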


\begin{proof}
 Recall that we have the exact sequence $$0 \rightarrow \mathcal{S} \rightarrow Sym^2(f_*\omega_f\otimes \mathcal{L}^\vee) \rightarrow f_*\left((\omega_f \otimes f^*\mathcal{L}^\vee)^{\otimes 2}\right) \rightarrow 0.$$ Observe that $f_*(\omega_f^{\otimes 2})$ is tautological by Grothendieck Riemann–Roch. Using push-pull formula, we know $f_*\left((\omega_f \otimes f^*\mathcal{L}^\vee)^{\otimes 2}\right)$ is tautological. Our lemma then follows from Whitney's Formula. 
 \end{proof}

\begin{lemma}
The classes $c_1(b_\omega^*\eta_i^*\mathcal{O}_{\mathbb{P}\mathcal{F}}(1)) (1\leq i \leq 7)$ are tautological.
\label{lemma06}   
\end{lemma}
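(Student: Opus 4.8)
The plan is to compute the line bundle $b_\omega^*\eta_i^*\mathcal{O}_{\mathbb{P}\mathcal{F}}(1)$ outright and recognize its first Chern class as a (constant) multiple of $\psi_i$. The first step is a base-change reduction. Since $\mathcal{F}$ restricts to $\mathcal{G}^\vee$ on $\mathcal{M}_\omega$, pulling back the $\mathbb{P}^3$-bundle $\gamma\colon\mathbb{P}\mathcal{F}\to BG$ along the structure map $\mathcal{M}_\omega\to BG$ produces the bundle $\mathbb{P}\mathcal{G}^\vee\to\mathcal{M}_\omega$, compatibly with the respective $\mathcal{O}(1)$'s; and under this identification the composite section $\eta_i\circ b_\omega\colon\mathcal{M}_\omega\to\mathbb{P}\mathcal{F}$ becomes exactly the section $\eta_i'\colon\mathcal{M}_\omega\to\mathbb{P}\mathcal{G}^\vee$ attached to $\sigma_i$. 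Hence $b_\omega^*\eta_i^*\mathcal{O}_{\mathbb{P}\mathcal{F}}(1)=(\eta_i')^*\mathcal{O}_{\mathbb{P}\mathcal{G}^\vee}(1)$, and it suffices to evaluate the right-hand side.

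Next I would untwist. As $\mathcal{G}=\mathcal{L}^\vee\otimes\mathcal{G}'$, we have $\mathcal{G}^\vee=\mathcal{L}\otimes(\mathcal{G}')^\vee$, so $\mathbb{P}\mathcal{G}^\vee$ is canonically isomorphic to $\mathbb{P}(\mathcal{G}')^\vee$, with $\mathcal{O}_{\mathbb{P}\mathcal{G}^\vee}(1)\cong\mathcal{O}_{\mathbb{P}(\mathcal{G}')^\vee}(1)\otimes\mathcal{L}^\vee$ (the $\mathcal{L}^\vee$ pulled back from $\mathcal{M}_\omega$), using the paper's projectivization convention and the standard twist formula for $\mathcal{O}(1)$ under tensoring a bundle by a line bundle. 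Now from the exact sequence \eqref{exact1}, $\mathcal{G}'$ is a rank-$4$ quotient of $f_*\omega_f$, so dually $(\mathcal{G}')^\vee\hookrightarrow(f_*\omega_f)^\vee$ cuts out a sub-$\mathbb{P}^3$-bundle $\mathbb{P}(\mathcal{G}')^\vee\hookrightarrow\mathbb{P}((f_*\omega_f)^\vee)$ of the relative canonical space, on which $\mathcal{O}(1)$ restricts to $\mathcal{O}(1)$; this is precisely the ``hyperplane bundle'' through which each of the eight sections of the universal canonical curve $j\colon\mathcal{C}_\omega\hookrightarrow\mathbb{P}((f_*\omega_f)^\vee)$ factors (a section of $\omega_f$ vanishing along $\sigma_1+\cdots+\sigma_8$ vanishes at $\sigma_j$). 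Under $\mathbb{P}\mathcal{G}^\vee\cong\mathbb{P}(\mathcal{G}')^\vee$, the section $\eta_i'$ is just $j\circ\sigma_i$ regarded as a point of this $\mathbb{P}^3$-bundle.

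Finally, since $j$ is the relative canonical embedding we have $j^*\mathcal{O}_{\mathbb{P}((f_*\omega_f)^\vee)}(1)=\omega_f$ (no extra twist, because here $f_*\omega_f$ is used directly rather than an $SL$-normalized bundle as in Section~2). Restricting to the sub-$\mathbb{P}^3$-bundle and pulling back along $\sigma_i$ gives $(\eta_i')^*\mathcal{O}_{\mathbb{P}(\mathcal{G}')^\vee}(1)=\sigma_i^*\omega_f$, whose first Chern class is $\psi_i$. Combining with the twist and the preceding proposition ($c_1(\mathcal{L})=2\psi_i$) yields
$$c_1\bigl(b_\omega^*\eta_i^*\mathcal{O}_{\mathbb{P}\mathcal{F}}(1)\bigr)=\psi_i-c_1(\mathcal{L})=\psi_i-2\psi_i=-\psi_i,$$
which is tautological (and independent of $i$), completing the proof; together with Corollaries \ref{coro05}, \ref{coro06} and Lemma \ref{lemma05} this exhibits every generator of $A^*(\mathcal{M}_\omega)$ as tautological. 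The only real care needed is the bookkeeping with the projectivization convention and the line-bundle twist $\mathcal{G}^\vee=\mathcal{L}\otimes(\mathcal{G}')^\vee$: a sign or coefficient slip there would alter the exact constant, but the answer is in any case an integer combination of $\psi_i$ and $c_1(\mathcal{L})=2\psi_i$, so the conclusion is robust. I expect that twist computation, and checking that $\mathcal{O}(1)$ of the canonical $\mathbb{P}^4$-bundle restricts correctly to the hyperplane sub-bundle, to be the (modest) main obstacle.
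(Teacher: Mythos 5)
Your argument is correct and follows essentially the same route as the paper's proof: identify $b_\omega^*\eta_i^*\mathcal{O}_{\mathbb{P}\mathcal{F}}(1)$ with $\eta_i'^*\mathcal{O}_{\mathbb{P}\mathcal{G}^\vee}(1)$, untwist via $\mathcal{G}=\mathcal{L}^\vee\otimes\mathcal{G}'$, and recognize the result as $\sigma_i^*\omega_f$ up to a power of $\mathcal{L}$. The only divergence is the sign of that twist: with the convention $\mathbb{P}\mathcal{K}=\mathscr{P}\mathrm{roj}(Sym^\bullet\mathcal{K}^\vee)$ one gets $\mathcal{O}_{\mathbb{P}\mathcal{G}^\vee}(1)=g^*\mathcal{O}_{\mathbb{P}\mathcal{G}'^\vee}(1)\otimes a^*\mathcal{L}^\vee$, so your value $c_1=\psi_i-2\psi_i=-\psi_i$ is the one consistent with that convention (the paper's printed proof instead tensors by $a^*\mathcal{L}$ and concludes $3\psi_i$), but as you observe the class is an integer multiple of $\psi_i$ either way, so the lemma holds regardless.
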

\begin{proof} Consider the following diagram:
\[\begin{tikzcd}[row sep=huge]
    &\mathbb{P}\mathcal{G}'^\vee \arrow[d,hook,"\iota'"']\\
\mathcal{C}_\omega \arrow[r,hook,"\iota"] \arrow[rd,"f"] & \mathbb{P}f_*\omega_f^\vee \arrow[d] & \mathbb{P}\mathcal{G}^\vee \arrow[ul,"\simeq","g"']  \arrow[r] \arrow[dl,bend left = 15, "a"]& \mathbb{P}\mathcal{F} \arrow[dl,"\gamma"] \\
& \mathcal{M}_\omega \arrow[ur,bend left=15, "\eta_i'"] \arrow[r]& BGL_4
\end{tikzcd}\]
where $\iota'$ is induced by the exact sequence \eqref{exact1}, $a$ is the structure map and $g$ is the isomorphism between $\mathbb{P}\mathcal{G}^\vee$ and $\mathbb{P}\mathcal{G}'^\vee$ induced by $\mathcal{L}^\vee \otimes \mathcal{G}' \cong \mathcal{G}$.
We have
\begin{center}
$
\begin{aligned}
    b_\omega^*\eta_i^*\mathcal{O}_{\mathbb{P}\mathcal{F}}(1) &= \eta_i'^*\mathcal{O}_{\mathbb{P}\mathcal{G}^\vee}(1)\\& = \eta_i'^*(g^*\mathcal{O}_{\mathbb{P}\mathcal{G}'^\vee}(1)\otimes a^* \mathcal{L})\\& = \sigma_i^*\omega_f \otimes \mathcal{L}.
\end{aligned}
$    
\end{center}

Therefore, we have $c_1\left(b_\omega^*\eta_i^*\mathcal{O}_{\mathbb{P}\mathcal{F}}(1)\right)=c_1(\sigma_i^*\omega_f)+c_1(\mathcal{L})$. In particular, $c_1(b_\omega^*\eta_i^*\mathcal{O}_{\mathbb{P}\mathcal{F}}(1))=3\psi_i$ is tautological for each $i$. \end{proof}

\begin{coro}
    The Chow ring $A^*(\mathcal{M}_{5,8})$ is tautological and $\mathcal{M}_{5,8}$ has the CKgP.
\end{coro}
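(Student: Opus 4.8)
The plan is to first upgrade the results of this section into the statement that $A^*(\mathcal{M}_{5,8}\setminus\mathcal{M}_{5,8}^3)$ is tautological, and then to import the gonality stratification exactly as in the proof that $A^*(\mathcal{M}_{5,7})$ is tautological. Write $\iota\colon\mathcal{M}_\omega\hookrightarrow\mathcal{M}_{5,8}\setminus\mathcal{M}_{5,8}^3$ for the closed immersion and let $U_8$ be its open complement.

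First I would treat the open locus. By Corollary \ref{coro05} together with Corollary \ref{coro06} and Lemmas \ref{lemma05} and \ref{lemma06}, the ring $A^*(\mathcal{M}_\omega)$ is generated by classes each of which is the restriction $\iota^*\alpha$ of a tautological class $\alpha$ on $\mathcal{M}_{5,8}\setminus\mathcal{M}_{5,8}^3$; hence every element of $A^*(\mathcal{M}_\omega)$ is itself of the form $\iota^*\alpha$ with $\alpha$ tautological. By the projection formula $\iota_*(\iota^*\alpha)=\alpha\cdot[\mathcal{M}_\omega]$, which is tautological because $[\mathcal{M}_\omega]$ is tautological by Proposition \ref{prop04}; so $\iota_*A^*(\mathcal{M}_\omega)$ is tautological. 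Feeding this into the excision sequence
$$A^*(\mathcal{M}_\omega)\xrightarrow{\ \iota_*\ }A^*(\mathcal{M}_{5,8}\setminus\mathcal{M}_{5,8}^3)\longrightarrow A^*(U_8)\longrightarrow 0,$$
and using that $A^*(U_8)$ is tautological by Corollary \ref{coro25} while the restriction map sends the $\psi$, $\kappa$, $\lambda$ classes of $\mathcal{M}_{5,8}\setminus\mathcal{M}_{5,8}^3$ onto the corresponding generators of $A^*(U_8)$, a short diagram chase shows $A^*(\mathcal{M}_{5,8}\setminus\mathcal{M}_{5,8}^3)$ is tautological. Then, as in the $\mathcal{M}_{5,7}$ case, I would run the two excision sequences
$$A^*(\mathcal{M}_{5,8}^2)\xrightarrow{\ i_*\ }A^*(\mathcal{M}_{5,8})\to A^*(\mathcal{M}_{5,8}\setminus\mathcal{M}_{5,8}^2)\to 0,$$
$$A^*(\mathcal{M}_{5,8}^3\setminus\mathcal{M}_{5,8}^2)\xrightarrow{\ i'_*\ }A^*(\mathcal{M}_{5,8}\setminus\mathcal{M}_{5,8}^2)\to A^*(\mathcal{M}_{5,8}\setminus\mathcal{M}_{5,8}^3)\to 0,$$
and invoke the Canning--Larson results (classes supported on the genus-$5$ hyperelliptic locus are tautological for $n\le 16$, classes supported on the trigonal locus are tautological for $n\le 12$, via Theorem 6.1 and Lemma 9.9 of \cite{HS} and Proposition 1 of \cite{T}), both of which cover $n=8$, to conclude $A^*(\mathcal{M}_{5,8})$ is tautological.

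For the CKgP I would show each stratum has it and propagate. The stack $U_8$ differs from $\mathcal{M}_{5,8}'$ only by a $\mu_5$-gerbe, and $\mathcal{M}_{5,8}'$ is an open substack of the Grassmann bundle $G(3,\mathcal{E})$ over $V_8\subset(\mathbb{P}\mathcal{V})^8$, an iterated projective bundle over $BSL_5$; since $BSL_5$ has the CKgP and the CKgP is preserved under projective bundles, Grassmann bundles, open substacks, and gerbes, $U_8$ has the CKgP. Likewise $\mathcal{M}_\omega$ is an open substack of a Grassmann bundle over $V'\subset(\mathbb{P}\mathcal{F})^7$ over $BG$, and $BG$ has the CKgP (it is a projective bundle over $BPGL_5$, which does), so $\mathcal{M}_\omega$ has the CKgP. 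Tensoring the excision sequence for $\mathcal{M}_{5,8}\setminus\mathcal{M}_{5,8}^3=U_8\cup\mathcal{M}_\omega$ with $A_*(X)$ for arbitrary $X$ and using right exactness of $\otimes$, the CKgP for the open piece $U_8$ and the closed piece $\mathcal{M}_\omega$ forces it for the union; applying the same principle along the gonality stratification, together with the CKgP for the genus-$5$ hyperelliptic and trigonal loci from \cite{HS}, yields the CKgP for $\mathcal{M}_{5,8}$.

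The step I expect to require the most care is the passage from ``$A^*(\mathcal{M}_\omega)$ is generated by restrictions of tautological classes'' to ``$\iota_*A^*(\mathcal{M}_\omega)$ is tautological'': this genuinely uses both Proposition \ref{prop04} and the fact that the generators found in Corollaries \ref{coro05}--\ref{coro06} and Lemmas \ref{lemma05}--\ref{lemma06} are honest restrictions of ambient tautological classes, not merely abstractly tautological on $\mathcal{M}_\omega$, so those statements must be read that way. The second delicate point is the bookkeeping of the CKgP permanence properties --- stability under gerbes and the CKgP for $BG$ --- which should be cited precisely from \cite{HS}; everything else is a formal diagram chase.
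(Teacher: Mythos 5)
Your proposal reconstructs exactly the argument the paper intends: the paper's own proof is a bare citation of Corollary \ref{coro25}, Corollary \ref{coro05}, Corollary \ref{coro06}, Lemma \ref{lemma05}, Lemma \ref{lemma06} for the first statement and of the permanence lemmas of \cite{HS} for the second, and your excision/projection-formula chase (including the observation, stated in the paper right after Proposition \ref{prop04}, that one needs the generators of $A^*(\mathcal{M}_\omega)$ to be \emph{restrictions} of ambient tautological classes so that $\iota_*\iota^*\alpha=\alpha\cdot[\mathcal{M}_\omega]$ lands in the tautological ring) is precisely the glue the paper leaves implicit. The one soft spot is your justification that $BG$ has the CKgP: $BG$ is not a projective bundle over $BPGL_5$ in the vector-bundle sense (only a Brauer--Severi fibration), and the CKgP for $BPGL_5$ is not among the facts available from \cite{HS}; indeed the whole point of the $\mu_5$-gerbe $\mathcal{M}_{5,n}'\to U_n$ in Section 2 is to trade $BPGL_5$ for $BSL_5$. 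The clean fix is to note that $G$ is an extension of $GL_4$ by $\mathbb{G}_a^4$, hence a special group, so $BG$ has the CKgP by the quotient lemma in \cite{HS} (consistent with the paper's use of the isomorphism $A^*(BG)\cong A^*(BGL_4)$). With that substitution your argument is complete and agrees with the paper's.
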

\begin{proof} The first statement follows from Corollary \ref{coro25}, Corollary \ref{coro05}, Corollary \ref{coro06}, Lemma \ref{lemma05} and Lemma \ref{lemma06}. The second statement follows from Lemmas 3.3, 3.4, 3.5, 3.7 and 3.8 in \cite{HS}. 
\end{proof}

\section{Classes supported on $(\mathcal{M}_{5,9} \setminus \mathcal{M}_{5,9}^3)\setminus U_9$}

Define $\mathcal{M}_{\omega,i}$ to be the locus where $f_*(\omega_f) \rightarrow \displaystyle{\bigoplus_{j\neq i} \sigma_j^*\omega_f}$ drops rank. By Proposition \ref{prop 0.2}, we have $\mathcal{M}_{5,9} \setminus \mathcal{M}_{5,9}^3=U_9 \cup \mathcal{M}_{\omega,1} \cdots \cup \mathcal{M}_{\omega,9}$. Furthermore, we claim the loci $\mathcal{M}_{\omega,i}$ are disjoint. Indeed, without loss of generality, assume for sake of contradiction that $\mathcal{M}_{\omega,1} \cap \mathcal{M}_{\omega,2} \neq \varnothing$ and $C$ is a curve in their intersection. Then $\mathcal{O}(p_1+ p_3+ \cdots + p_9) \cong \omega_C \cong \mathcal{O}(p_2+ \cdots + p_9)$. Thus we have $p_1=p_2$, which contradicts to the fact that the marked points are disjoint.

Each $[\mathcal{M}_{\omega,i}]$ is the pullback of $[\mathcal{M}_{\omega}]$ under the map $\mathcal{M}_{5,9} \rightarrow \mathcal{M}_{5,8}$ forgetting the $i$-th marked point. Proposition \ref{prop04} combined with the fact that pullbacks of tautological classes along forgetful maps are tautological implies $[\mathcal{M}_{\omega,i}]$ is tautological too.

Consider the following exact sequence:
\begin{equation}
    \bigoplus_{i=1}^{9} A^*(\mathcal{M}_{\omega,i}) \rightarrow A^*(\mathcal{M}_{5,9} \setminus \mathcal{M}_{5,9}^3) \rightarrow A^*(U_9) \rightarrow 0
\label{E}
\end{equation}
It then remains to prove that all classes supported on $\mathcal{M}_{\omega,i}$ are tautological for all $i$, and for this, we are going to use the same method as in the case $n=8$ in Section 3. Furthermore, by symmetry, it suffices to show all classes supported on $\mathcal{M}_{\omega,9}$ are tautological.

Using the same notation as in Section 3, we have the exact sequence \begin{equation}
0 \rightarrow \mathcal{O}_{\mathcal{M}_{\omega,9}} \rightarrow \mathcal{L}^\vee \otimes f_*\omega_f \rightarrow \mathcal{G} \rightarrow 0
\label{15}
\end{equation}
where $\mathcal{G}:=\mathcal{L}^\vee \otimes \mathcal{G}'$.
Take $G \subset PGL_5$ to be the stabilizer of the pair $(H,p_9)$, where $H$ is the hyperplane spanned by the first 8 points. After a proper choice of coordinates, $G$ is the subgroup of $PGL_5$ consisting of matrices of the form 
$$ 
\begin{bNiceArray}{c|cccc}[margin,columns-width=auto]
  1 & 0 & 0 & 0 & 0\\
\hline
0 & \Block{4-4}<\Large>{GL_4} &&& \\
0 \\ 
0 \\
0

\end{bNiceArray}.$$

We denote the universal bundle over $BG$ by $\mathcal{W}'$. Note that $\mathcal{G}^\vee$ defined after the exact sequence \eqref{15} corresponds to the subbundle $\mathcal{W} \subset \mathcal{W}'$, which is generated by the constant sections $(0,1,0,0,0),(0,0,1,0,0), (0,0,0,1,0),(0,0,0,0,1)$. Therefore, we have the commutative diagram 
\[\begin{tikzcd}
\mathbb{P}\mathcal{W} \arrow[rr,hookrightarrow,"i"] \arrow[dr,"\gamma"'] && \mathbb{P}\mathcal{W}' \arrow[dl, "\gamma'"] \\
& BG.
\end{tikzcd}\]

Take the projective bundle $\mathbb{P}\mathcal{W}$ over $BG$ and its fiber product $(\mathbb{P}\mathcal{W})^7$ over $BG$. We have the composition map $(\mathbb{P}\mathcal{W})^7 \xrightarrow{\eta_i} \mathbb{P}\mathcal{W} \xrightarrow{\gamma} BG$, where $\eta_i$ is the $i$-th projection map and $\gamma$ is the canonical map. Note that by the definition of the fiber product, we have $\gamma \circ \eta_i = \gamma \circ \eta_j$ for any $i,j$. Thus we denote $\gamma \circ \eta_i$ by $\overline{\gamma}$. Moreover, we have the natural map $b_{9}: \mathcal{M}_{\omega,9} \rightarrow (\mathbb{P}\mathcal{W})^7 $, mapping the curve with nine marked points to the first seven points which lie on a hyperplane in $\mathbb{P}^4$. We also have the constant section $\sigma_9':BG \rightarrow \mathbb{P}\mathcal{W}'$ corresponding to the fixed 9-th marked point.

Therefore, we have the composition maps
\[\begin{tikzcd}
\mathcal{M}_{\omega,9} \arrow[r,"b_9"] &(\mathbb{P}\mathcal{W})^7 \arrow[r,"\eta_i"]\arrow[dr] &\mathbb{P}\mathcal{W} \arrow[r,hookrightarrow,"i"] \arrow[d,"\gamma"] & \mathbb{P}\mathcal{W}' \arrow[dl,"\gamma'"']\\
&& BG \arrow[ur,bend right, "\sigma_9'"']
\end{tikzcd}\]

Since the first seven marked points and the 9-th marked point impose independent condition on quadrics in $\mathbb{P}^4$, the evaluation map is surjective and its kernel $\mathcal{E}$ is a vector bundle. Therefore, we have the exact sequence
\begin{equation}
0 \rightarrow \mathcal{E} \rightarrow \overline{\gamma}^*\gamma'_* \mathcal{O}_{\mathbb{P}\mathcal{W}'}(2) \xrightarrow{\text{evaluation map}} \bigoplus^7_{i=1}\eta_i^*i^*\mathcal{O}_{\mathbb{P}\mathcal{W}'}(2)\bigoplus\overline{\gamma}^*\sigma_9^*\mathcal{O}_{\mathbb{P}\mathcal{W}'}(2) \rightarrow 
0.
\end{equation}
Since the curve is canonically embedded in $\mathbb{P}^4$, we have the exact sequence
\begin{equation}
0 \rightarrow \mathcal{S} \rightarrow Sym^2(f_*\omega_f \otimes \mathcal{L}^\vee) \rightarrow f_*\left((\omega_f \otimes (f^*\mathcal{L}^\vee))^{\otimes 2}\right) \rightarrow 0.   
\end{equation}

\begin{prop}
We have $\mathcal{M}_{\omega,9} \overset{\text{open}}\subset G(3,\mathcal{E})$.  
\end{prop}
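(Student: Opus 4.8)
The plan is to mimic the argument already carried out for $\mathcal{M}_\omega$ in Section 3, the only new feature being the presence of the fixed ninth point $p_9$, which is already accounted for by the choice of the group $G$ and the extra summand $\overline{\gamma}^*\sigma_9^*\mathcal{O}_{\mathbb{P}\mathcal{W}'}(2)$ in the evaluation map. First I would construct a map $\mathcal{M}_{\omega,9} \to G(3,\mathcal{E})$ via the universal property of the Grassmannian. To do this, I would pull back the defining exact sequence of $\mathcal{E}$ along $b_9$ and compare it with the exact sequence $0 \to \mathcal{S} \to Sym^2(f_*\omega_f \otimes \mathcal{L}^\vee) \to f_*((\omega_f \otimes f^*\mathcal{L}^\vee)^{\otimes 2}) \to 0$; as in Section 3 one identifies $b_9^*\overline{\gamma}^*\gamma'_*\mathcal{O}_{\mathbb{P}\mathcal{W}'}(2)$ with $Sym^2(f_*\omega_f \otimes \mathcal{L}^\vee)$ and identifies the target summands $\eta_i^*i^*\mathcal{O}_{\mathbb{P}\mathcal{W}'}(2)$ (for $1\le i\le 7$) and $\sigma_9^*\mathcal{O}_{\mathbb{P}\mathcal{W}'}(2)$ with $\sigma_i^*((\omega_f\otimes f^*\mathcal{L}^\vee)^{\otimes 2})$ for $i=1,\dots,9$ (the $i=9$ term coming from the constant section $\sigma_9'$). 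Since the evaluation map $f_*((\omega_f\otimes f^*\mathcal{L}^\vee)^{\otimes 2}) \to \bigoplus_{i}\sigma_i^*((\omega_f\otimes f^*\mathcal{L}^\vee)^{\otimes 2})$ is surjective — here I use that the nine marked points are distinct and that on $\mathcal{M}_{\omega,9}$ they impose independent conditions on quadrics, which holds because only the sub-configuration $p_1,\dots,p_8$ fails to do so while the full nine points still do by Proposition \ref{prop 0.2} — the subbundle $\mathcal{S}$ lands inside $b_9^*\mathcal{E}$, yielding the classifying map.

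Next I would show this map is an open immersion onto a locus $W \subset G(3,\mathcal{E})$. I would build the inverse on the locus where the three quadrics cut out a smooth complete intersection curve: over $G(3,\mathcal{E})$ form $\mathcal{P} := G(3,\mathcal{E}) \times_{BG} \mathbb{P}\mathcal{W}'$ (or the appropriate $\mathbb{P}\mathcal{W}'$-bundle so that the quadrics make sense as sections of $\mathcal{O}(2)$), let $\mathcal{C}$ be the common vanishing locus of the universal net of quadrics, and restrict to the open $W$ where $\mathcal{C} \to G(3,\mathcal{E})$ is a smooth genus-$5$ curve canonically embedded. The nine sections are produced exactly as before: seven from the identity composed with $G(3,\mathcal{E}) \to V' \subset (\mathbb{P}\mathcal{W})^7 \xrightarrow{\eta_i} \mathbb{P}\mathcal{W}$, and the eighth section being the ninth marked point $p_9$ coming from $\sigma_9': BG \to \mathbb{P}\mathcal{W}'$; the eighth of the original nine points is the one that lies in the span of the first seven inside the hyperplane $H$, recovered as the residual point of the complete intersection in $\mathbb{P}^3 = \mathbb{P}\mathcal{W}$ (this is where Lemma \ref{lemma31} is used: $p_1,\dots,p_7$ impose independent conditions, so $p_8$ is uniquely determined). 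These sections factor through $\mathcal{C}$ by construction, giving a genuine object of $\mathcal{M}_{\omega,9}$, and the two constructions are mutually inverse on $W$.

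The main obstacle, as in the $n=8$ case, is bookkeeping rather than a genuinely new idea: one must be careful that the hyperplane $H$ here is spanned by $p_1,\dots,p_8$ (not all nine points), that the ninth point $p_9$ is genuinely off $H$ and is the point fixed by $G$, and that the ambient $\mathbb{P}^4$-bundle is reconstructed correctly from $\mathcal{W}'$ — in particular that $\mathbb{P}\mathcal{W} \cong \mathbb{P}^3$ inside $\mathbb{P}\mathcal{W}' \cong \mathbb{P}^4$ is exactly $H$, so that the reduction from nine points in $\mathbb{P}^4$ to seven points in $\mathbb{P}^3$ is valid and the eighth point is recovered inside $H$. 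One should also check the evaluation map \eqref{eva2}-analogue is surjective over the relevant locus $V'$, i.e. that the image of $b_9$ lies in $V'$, which again is Lemma \ref{lemma31}. Once these compatibilities are in place the proof is a verbatim transcription of the Section 3 argument, so I would keep it brief and simply point to the parallel steps.

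\begin{proof}
The argument is identical to that of the case $n=8$ in Section 3, with the stabilizer group $G$ of the pair $(H,p_9)$ and the extra summand $\overline{\gamma}^*\sigma_9^*\mathcal{O}_{\mathbb{P}\mathcal{W}'}(2)$ playing the role of the fixed ninth point. As there, pulling back the defining sequence of $\mathcal{E}$ along $b_9$ and identifying it with the sequence $0 \to \mathcal{S} \to Sym^2(f_*\omega_f \otimes \mathcal{L}^\vee) \to f_*((\omega_f \otimes f^*\mathcal{L}^\vee)^{\otimes 2}) \to 0$ exhibits $\mathcal{S}$ as a subbundle of $b_9^*\mathcal{E}$, since the evaluation map to $\bigoplus_{i=1}^{9}\sigma_i^*((\omega_f \otimes f^*\mathcal{L}^\vee)^{\otimes 2})$ is surjective; this gives a map $\mathcal{M}_{\omega,9} \to G(3,\mathcal{E})$. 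Conversely, on the open locus $W \subset G(3,\mathcal{E})$ where the universal net of quadrics cuts out a smooth genus-$5$ canonical curve, the common vanishing locus together with its nine sections — the seven coming from $G(3,\mathcal{E}) \to V' \subset (\mathbb{P}\mathcal{W})^7 \xrightarrow{\eta_i}\mathbb{P}\mathcal{W}$, the eighth being the residual point of the complete intersection inside $\mathbb{P}\mathcal{W} = H$ (well-defined by Lemma \ref{lemma31}), and the ninth being $\sigma_9'$ — defines an object of $\mathcal{M}_{\omega,9}$. These two constructions are mutually inverse, so $\mathcal{M}_{\omega,9} \overset{\text{open}}\subset G(3,\mathcal{E})$.
\end{proof}
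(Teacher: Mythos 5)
Your overall strategy matches the paper's (pull back the defining sequence of $\mathcal{E}$, exhibit $\mathcal{S}$ as a subbundle via the universal property of the Grassmannian, and invert on the locus of smooth complete intersections, recovering $p_8$ as the residual point of the net of quadrics in $H$). However, there is a genuine error in your key surjectivity step. You claim that the evaluation map $f_*((\omega_f\otimes f^*\mathcal{L}^\vee)^{\otimes 2}) \rightarrow \bigoplus_{i=1}^{9}\sigma_i^*((\omega_f\otimes f^*\mathcal{L}^\vee)^{\otimes 2})$ at all \emph{nine} marked points is surjective, ``because only the sub-configuration $p_1,\dots,p_8$ fails to impose independent conditions while the full nine points still do by Proposition \ref{prop 0.2}.'' This reads Proposition \ref{prop 0.2} backwards: that proposition says nine points fail to impose independent conditions on quadrics precisely when eight of them sum to $\omega_C$, and $\mathcal{M}_{\omega,9}$ is \emph{by definition} the locus where $\omega_C \cong \mathcal{O}_C(p_1+\cdots+p_8)$. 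So on $\mathcal{M}_{\omega,9}$ the nine points never impose independent conditions, and your evaluation map at nine points is nowhere surjective there (its image has corank one, since $h^1(\omega_C^{\otimes 2}(-p_1-\cdots-p_9)) = h^0(\omega_C^{\vee}(p_1+\cdots+p_9)) = h^0(\mathcal{O}_C(p_9)) = 1$). The target also does not match the bundle $\mathcal{E}$ defined in the text, whose quotient has only eight summands.

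The correct statement, and the one the paper uses, is that the evaluation is taken only at the eight points $p_1,\dots,p_7,p_9$, i.e.\ onto $\bigoplus^7_{i=1}\eta_i^*i^*\mathcal{O}_{\mathbb{P}\mathcal{W}'}(2)\oplus\overline{\gamma}^*\sigma_9^*\mathcal{O}_{\mathbb{P}\mathcal{W}'}(2)$; the point $p_8$ is deliberately omitted and recovered later as the residual point. Surjectivity at these eight points holds because $h^0(\omega_C^{\vee}(p_1+\cdots+p_7+p_9))\neq 0$ would force $\omega_C\cong\mathcal{O}_C(p_1+\cdots+p_7+p_9)$, which combined with $\omega_C\cong\mathcal{O}_C(p_1+\cdots+p_8)$ gives $p_8=p_9$, contradicting the disjointness of the sections. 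With this correction (and the target of the evaluation map fixed accordingly), the rest of your argument --- including the inverse construction on the open locus and the use of Lemma \ref{lemma31} to determine $p_8$ from $p_1,\dots,p_7$ --- goes through and is in fact more detailed than the paper's own proof.
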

\begin{proof} As in the case $n=8$, we have $b_9^*\overline{\gamma}
^*\gamma'_*\mathcal{O}_{\mathbb{P}\mathcal{W}'}(2)=Sym^2(f_*\omega_f\otimes \mathcal{L}^\vee)$ and the map $f_*\left((\omega_f \otimes (f^*\mathcal{L}^\vee))^{\otimes 2}\right) \rightarrow b_9^*\left(\bigoplus^7_{i=1}\eta_i^*i^*\mathcal{O}_{\mathbb{P}\mathcal{W}'}(2)\bigoplus\overline{\gamma}^*\sigma_9^*\mathcal{O}_{\mathbb{P}\mathcal{W}'}(2)\right)$ is surjective. This proposition then follows from universal property of the Grassmannian. 
\end{proof}

From our argument above, we have the composition map 
$$\mathcal{M}_{\omega,9} \subset G(3, \mathcal{E}) \rightarrow U' \subset (\mathbb{P}\mathcal{W})^7 \rightarrow \mathbb{P}\mathcal{W} \rightarrow BG,$$
where each collection of points in the open set $U'$ is seven points which impose
independent condition on spaces of quadrics in $\mathbb{P}^3$.

\begin{coro}
    The Chow Ring $A^*(\mathcal{M}_{\omega,9})$ is generated by $c_1(\mathcal{S}), c_2(\mathcal{S}), c_3(\mathcal{S}), c_1(\mathcal{W})$, $c_2(\mathcal{W}), c_3(\mathcal{W}), c_4(\mathcal{W}), c_1(\eta_i^*\mathcal{O}_{\mathbb{P}\mathcal{V}'}(1))$ for $1 \leq i \leq 7$. Furthermore, using the same method as in Section 3 we can prove that these classes are tautological.
\label{coro41}
\end{coro}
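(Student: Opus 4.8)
The plan is to mirror, essentially verbatim, the argument carried out in Section 3 for the locus $\mathcal{M}_\omega \subset \mathcal{M}_{5,8}$, replacing the bundle $\mathcal{F}'$ (the universal bundle over the stabilizer of a hyperplane) with the bundle $\mathcal{W}'$ (the universal bundle over the stabilizer of a hyperplane-point pair). The composition map $\mathcal{M}_{\omega,9} \subset G(3,\mathcal{E}) \to U' \subset (\mathbb{P}\mathcal{W})^7 \to \mathbb{P}\mathcal{W} \to BG \xrightarrow{h} BGL_4$ gives, exactly as in Corollary \ref{coro05}, that $A^*(\mathcal{M}_{\omega,9})$ is generated by the listed classes $c_1(\mathcal{S}), c_2(\mathcal{S}), c_3(\mathcal{S})$, the Chern classes $c_j(\mathcal{W})$ for $1 \le j \le 4$, and the classes $c_1(\eta_i^*\mathcal{O}_{\mathbb{P}\mathcal{W}}(1))$ for $1 \le i \le 7$. (Here one notes that the group homomorphism $G \to GL_4$ again induces an isomorphism $A^*(BG) \cong A^*(BGL_4)$, and that $\mathcal{G}^\vee$ corresponds to the rank-$4$ subbundle $\mathcal{W} \subset \mathcal{W}'$.)

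Next I would reprove, in this setting, the three ingredients that make those generators tautological. First, the identity $c_1(\mathcal{L}) = 2\psi_i$ for all $i$: the proof is word-for-word the same as in Section 3, since it only uses that $\omega_f(-\sigma_1-\cdots-\sigma_8)$ is fiberwise trivial on the universal curve over $\mathcal{M}_{\omega,9}$ (the first eight points still sum to the canonical class) together with the conormal sequence $\mathcal{N}_{D_i}^\vee = \sigma_i^*\omega_f$. This shows $c_1(\mathcal{L})$ is tautological and all $\psi_i$ are equal. Second, the Chern classes of $\mathcal{G}$: from the exact sequence $0 \to \mathcal{L} \to f_*\omega_f \to \mathcal{G}' \to 0$ (the analogue of \eqref{exact1}) and Whitney's formula, $c(\mathcal{G}) = c(\mathcal{G}')/\,$(correction by $c_1(\mathcal{L})$), hence all $c_j(\mathcal{G})$, equivalently all $c_j(\mathcal{W})$, are tautological. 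Third, the classes $c_1(\mathcal{S}), c_2(\mathcal{S}), c_3(\mathcal{S})$: using the exact sequence $0 \to \mathcal{S} \to Sym^2(f_*\omega_f \otimes \mathcal{L}^\vee) \to f_*\big((\omega_f \otimes f^*\mathcal{L}^\vee)^{\otimes 2}\big) \to 0$, together with the fact that $f_*(\omega_f^{\otimes 2})$ is tautological by Grothendieck--Riemann--Roch and the push-pull formula, Whitney's formula gives that $c(\mathcal{S})$ is tautological.

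Finally, the classes $c_1(\eta_i^*\mathcal{O}_{\mathbb{P}\mathcal{W}}(1))$: running the same diagram chase as in Lemma \ref{lemma06}, with $\mathbb{P}\mathcal{G}^\vee \xrightarrow{g} \mathbb{P}\mathcal{G}'^\vee$ the isomorphism induced by $\mathcal{L}^\vee \otimes \mathcal{G}' \cong \mathcal{G}$ and the embedding of the universal curve into $\mathbb{P}f_*\omega_f^\vee$, one obtains $b_9^*\eta_i^*\mathcal{O}_{\mathbb{P}\mathcal{W}}(1) = \sigma_i^*\omega_f \otimes \mathcal{L}$, so $c_1(b_9^*\eta_i^*\mathcal{O}_{\mathbb{P}\mathcal{W}}(1)) = \psi_i + c_1(\mathcal{L}) = 3\psi_i$, which is tautological. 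Combining all of these with the generation statement completes the proof. I expect no genuine obstacle here: the only point requiring a modicum of care is checking that the structural statements of Section 3 — in particular $\mathcal{M}_{\omega,9} \overset{\text{open}}\subset G(3,\mathcal{E})$ (already established above via Lemma \ref{lemma31} applied to the seven points lying on the hyperplane $H$) and the isomorphism $A^*(BG)\cong A^*(BGL_4)$ — transfer to the stabilizer of a pair rather than of a single hyperplane, which they do because the extra fixed point only rigidifies the group $G$ without changing the relevant representation-theoretic input.
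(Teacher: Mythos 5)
Your proposal is correct and follows exactly the route the paper intends: the corollary is stated with the instruction to "use the same method as in Section 3," and you carry out that transfer faithfully (generation from the composition through $G(3,\mathcal{E})\to(\mathbb{P}\mathcal{W})^7\to BG\cong BGL_4$, then the identities $c_1(\mathcal{L})=2\psi_i$, $c(\mathcal{G})=c(f_*\omega_f)/c(\mathcal{L})$, the Whitney/GRR argument for $c(\mathcal{S})$, and $c_1(b_9^*\eta_i^*\mathcal{O}_{\mathbb{P}\mathcal{W}}(1))=3\psi_i$). The only caveat worth recording is that the computation $c_1(\mathcal{L})=2\psi_i$ applies for $i\le 8$ (the sections appearing in the twist), which suffices since one tautological expression for $c_1(\mathcal{L})$ is all that is needed.
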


\begin{prop}
    The Chow Ring $A^*(\mathcal{M}_{5,9})$ is tautological and $\mathcal{M}_{5,9}$ has the CKgP.
\end{prop}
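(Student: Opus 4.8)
The plan is to assemble the pieces established in Sections~2--4 together with the excision arguments already used for $\mathcal{M}_{5,8}$ and $\mathcal{M}_{5,7}$. First I would observe that the generators of $A^*(\mathcal{M}_{\omega,9})$ listed in Corollary~\ref{coro41} -- and hence, by the symmetry permuting the marked points, the generators of each $A^*(\mathcal{M}_{\omega,i})$ -- are all restrictions of tautological classes on the ambient stack $\mathcal{M}_{5,9}\setminus\mathcal{M}_{5,9}^3$ (for instance $c_1(\mathcal{L})=2\psi_i$, $c_1(b_\omega^*\eta_i^*\mathcal{O}(1))=3\psi_i$, and the remaining Chern classes are Chern classes of bundles that descend to globally defined tautological bundles). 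Combined with the fact that each fundamental class $[\mathcal{M}_{\omega,i}]$ is tautological (Proposition~\ref{prop04} pulled back along the forgetful map, as noted after \eqref{E}), the projection formula shows that the image of each pushforward $A^*(\mathcal{M}_{\omega,i})\to A^*(\mathcal{M}_{5,9}\setminus\mathcal{M}_{5,9}^3)$ consists of tautological classes. Feeding this, together with Corollary~\ref{coro25} (which gives that $A^*(U_9)$ is tautological), into the right-exact sequence \eqref{E} yields that $A^*(\mathcal{M}_{5,9}\setminus\mathcal{M}_{5,9}^3)$ is tautological.

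Next I would propagate this across the gonality stratification exactly as in the proof that $A^*(\mathcal{M}_{5,7})$ is tautological. Using the two right-exact excision sequences
$$A^*(\mathcal{M}_{5,9}^2) \xrightarrow{i_*} A^*(\mathcal{M}_{5,9}) \to A^*(\mathcal{M}_{5,9}\setminus\mathcal{M}_{5,9}^2) \to 0$$
and
$$A^*(\mathcal{M}_{5,9}^3\setminus\mathcal{M}_{5,9}^2) \xrightarrow{i'_*} A^*(\mathcal{M}_{5,9}\setminus\mathcal{M}_{5,9}^2) \to A^*(\mathcal{M}_{5,9}\setminus\mathcal{M}_{5,9}^3) \to 0,$$
I would invoke the results of Canning--Larson \cite{HS} that all classes on $\mathcal{M}_{5,n}$ supported on the hyperelliptic locus are tautological for $n\leq 16$ and all classes supported on the trigonal locus are tautological for $n\leq 12$ (with Theorem~6.1 and Lemma~9.9 of \cite{HS} and Proposition~1 of \cite{T} handling the gonality-$\le 2$ stratum), so that the images of $i_*$ and $i'_*$ are tautological. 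Since $A^*(\mathcal{M}_{5,9}\setminus\mathcal{M}_{5,9}^3)$ is tautological by the previous paragraph, it follows that $A^*(\mathcal{M}_{5,9})$ is tautological.

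For the CKgP I would argue as for $\mathcal{M}_{5,8}$: via the isomorphisms of Sections~2 and~4, $U_9$ and each $\mathcal{M}_{\omega,i}$ are built from classifying stacks of linear algebraic groups, projective bundles, and Grassmann bundles, all of which have the CKgP; the hyperelliptic and trigonal loci of $\mathcal{M}_{5,9}$ have the CKgP by \cite{HS}; and the CKgP is stable under the gluing operations encoded in the above excision sequences. This is precisely the content of Lemmas~3.3, 3.4, 3.5, 3.7 and 3.8 of \cite{HS}, which give that $\mathcal{M}_{5,9}$ has the CKgP.

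The main obstacle -- essentially the only point requiring care -- is ensuring that the pushforward images from the loci $\mathcal{M}_{\omega,i}$ are tautological \emph{in} $\mathcal{M}_{5,9}\setminus\mathcal{M}_{5,9}^3$, not merely tautological as abstract rings: this is why it matters that the generators produced in Corollary~\ref{coro41} were identified with $\psi$-classes and with Chern classes of restrictions of globally defined tautological bundles, so that they extend to honest tautological classes on the ambient stack and the projection formula applies. Everything else is formal manipulation of the exact sequences already in hand.
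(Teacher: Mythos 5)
Your proposal is correct and follows essentially the same route as the paper: the paper's own proof simply cites the excision sequence \eqref{E}, Corollary \ref{coro25}, and Corollary \ref{coro41} for the first statement and the relevant lemmas of \cite{HS} for the CKgP, with the gonality stratification and the projection-formula argument (using that $[\mathcal{M}_{\omega,i}]$ and the generators of $A^*(\mathcal{M}_{\omega,i})$ are tautological) left implicit exactly as you have spelled them out. Your write-up is a more detailed version of the same argument, and the point you flag as the main obstacle is indeed the substantive content being relied upon.
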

\begin{proof} The first statement follows from the exact sequence \eqref{E}, Corollary \ref{coro25}, and Corollary \ref{coro41}. The second statement follows from Lemmas 3.3, 3.4, 3.7 and 3.8 in \cite{HS}. 
\end{proof}


\begin{thebibliography}{12}
\bibitem{3264}
D. Eisenbud and J. Harris,
\textit{3264 and all that --- a second course in algebraic geometry}, Cambridge University Press, Cambridge, 2016. MR3617981

\bibitem{HS}
S. Canning and H. Larson, \textit{On the Chow and Cohomology Rings of Moduli Space of Stable Curves}, 
J. Eur. Math. Soc. (2024).

\bibitem{HS2}
S. Canning and H. Larson, \textit{The Rational Chow Rings of Moduli Spaces of Hyperelliptic Curves with marked Points}, Ann. Sc. Norm. Super. Pisa Cl. Sci. (5) \textbf{25} (2024), no. 4, 2201–2239, MR4873516 

\bibitem{M2}
D. Mumford, \textit{Towards an enumerative geometry of the moduli space of curves}, Arithmetic and geometry, Vol. II, Progr. Math., vol. 36, Birkhäuser Boston, Boston, MA, 1983, pp. 271–328. MR 717614

\bibitem{M2i}
E. Larson, \textit{The Integral Chow Ring of $\overline{\mathcal{M}}_2$}, Algebr. Geom. \textbf{8} (2021), no. 3, 286–318. MR 4206438

\bibitem{M3}
C. Faber, \textit{Chow rings of moduli spaces of curves. I. The Chow ring of $\overline{\mathcal{M}}_3$}, Ann. of Math. (2) \textbf{132} (1990),
no. 2, 331–419. MR 1070600


\bibitem{M4}
C. Faber, \textit{Chow rings of moduli spaces of curves. II. Some results on the Chow ring of $\overline{\mathcal{M}}_4$}, Ann. of Math. (2) \textbf{132} (1990), no. 3, 421–449. MR 1078265


\bibitem{M5}
E. Izadi, \textit{The Chow ring of the moduli space of curves of genus 5}, The moduli space of curves (Texel Island, 1994), Progr. Math., vol. 129, Birkhäuser Boston, Boston, MA, 1995, pp. 267–304. MR 1363060

\bibitem{M6}
N. Penev and R. Vakil, \textit{The Chow ring of the moduli space of curves of genus six},  Algebr. Geom.
\textbf{2} (2015), no. 1, 123–136. MR 3322200

\bibitem{M789}
S. Canning and H. Larson, \textit{The Chow rings of the moduli spaces of curves of genus 7, 8, and 9}, J. of Algebraic Geom. \textbf{33} (2024), no.1, 55–116, MR4693574

\bibitem{T}
C. Faber, \textit{Relative maps and tautological classes}, J. Eur. Math. Soc. (JEMS) \textbf{7} (2005), no. 1, 13–49. MR2120989

\bibitem{MN}
M. Noether, \textit{\"Uber die invariante Darstellung algebraicher Funktionen}, Math. Ann., 17 (1880)
263–284.

\end{thebibliography}
\end{document}